\date{\today}
\numberwithin{equation}{section}
\theoremstyle{plain}
\newtheorem{theorem}{Theorem}[section]
\newtheorem{proposition}[theorem]{Proposition}
\newtheorem{lemma}[theorem]{Lemma}
\newtheorem{fact}[theorem]{Fact}
\theoremstyle{definition}
\newtheorem{definition}[theorem]{Definition}
\newtheorem{example}[theorem]{Example}
\newtheorem{remark}[theorem]{Remark}
 \newtheorem*{acknowledgements}{Acknowledgements}
\newcommand{\inner}[2]{\left\langle{#1},{#2}\right\rangle}
\newcommand{\R}{\boldsymbol{R}}
\newcommand{\Z}{\boldsymbol{Z}}
\newcommand{\C}{\boldsymbol{C}}
\newcommand{\rt}{\sqrt}
\newcommand{\fr}{\frac}
\renewcommand\Re{\operatorname{Re}}
\renewcommand\Im{\operatorname{Im}}
\renewcommand\bar{\overline}
\renewcommand\phi{\varphi}
\renewcommand\Sigma{\varSigma}
\renewcommand\Phi{\varPhi}
\newcommand\ord{\operatorname{ord}}
\newcommand\Hess{\operatorname{Hess}}
\newcommand\tr{\operatorname{tr}}
\renewcommand\tilde{\widetilde}
\renewcommand\epsilon{\varepsilon}
\newcommand{\BSigma}{\bbar{\Sigma}}
\newcommand{\del}{\partial}
\newcommand{\bbar}{\overline}
\newcommand{\al}{\alpha}
\newcommand{\gm}{\gamma}
\newcommand{\Chat}{\C\cup \{\infty\}}
\newcommand{\n}{\bm n}
\newcommand{\s}{\mathcal{S}}
\title[Affine maxface]
{A class of affine maximal surfaces with singularities and its relationship with minimal surface theory}
\author{
Jun Matsumoto
}
\date{\today}
\keywords{Affine maximal surface, Improper affine sphere, Minimal surface}
\subjclass[2020]{Primary 53A15; Secondary 53A35}
\address{
Department of Mathematics, \endgraf
Institute of Science Tokyo, \endgraf
O-okayama, Meguro, Tokyo, 152-8551, Japan
}
\email{matsumoto.j.d273@m.isct.ac.jp}
\begin{document}

\maketitle

\begin{abstract}
We study a global theory of affine maximal surfaces with singularities, which are called affine maximal maps and defined
by Aledo--Mart\' inez--Mil\' an.
In this paper, we define a special subclass of such surfaces other than improper affine fronts, called \emph{affine maxfaces}, and investigate their global properties with respect to certain notions of completeness.
In particular, by applying Euclidean minimal surface theory, we show that ``complete'' affine maxfaces satisfy an Osserman-type inequality. 
Moreover, one can also observe that affine maxfaces are in a class that does not contain non-trivial improper affine fronts.
We also provide examples of such surfaces which are related to Euclidean minimal surfaces.
\end{abstract}%@

\section{Introduction}

In affine differential geometry, it is well known that
a locally strongly convex affine minimal surface in unimodular affine 3-space $\R^3$ is locally given by the graph of a function $\phi(x,y)$ defined on a planar domain that satisfies the fourth-order partial differential  equation
\begin{equation}\label{maxeq}
\phi_{yy}\rho_{xx}-2\phi_{xy}\rho_{xy}+\phi_{xx}\rho_{yy} = 0,
\qquad
\rho \coloneqq (\det(\Hess \phi))^{-3/4},
\end{equation}
where $\Hess \phi$ denotes the positive definite Hessian matrix of $\phi(x,y)$.
Blaschke \cite{Bla23} showed that the equation \eqref{maxeq} is equivalent to  everywhere vanishing of the
affine mean curvature.
In this sense, the affine minimal surface is an affine analog of the Euclidean minimal surface 
(cf. \cite{Osserman}, \cite{LM99}, \cite{AFLminimal}).
However, Calabi \cite{Calabi82} showed that the second variation of the affine area functional for affine minimal surfaces is always non-positive (see also \cite{VV89}).
Hence today, the affine minimal surface is called the \emph{affine maximal surface}.

From a global viewpoint of the equation \eqref{maxeq}, the following two conjectures are well known:
\begin{itemize}
\item (The Chern conjecture (\cite{Chern73}, \cite{Chern79}))
\emph{Any affine maximal immersion in $\R^{3}$ expressed by the graph of a strongly convex function defined on the entire plane is equiaffinely equivalent to the elliptic paraboloid.}
\item (The Calabi conjecture (\cite{Calabi82}))
\emph{Any locally strongly convex affine complete affine maximal immersion in $\R^{3}$
is equiaffinely equivalent to the elliptic paraboloid.}
\end{itemize}
For these conjectures, Trudinger and Wang \cite{TW00} solved the Chern conjecture, and Li and Jia \cite{LJ01}
 (see also
 \cite{Calabi58} and \cite{TW02}) proved the Calabi conjecture in the affirmative.
In particular, after Li and Jia's work, the Calabi conjecture has been called the affine Bernstein theorem.
Thus, in global theory, the affine Bernstein theorem has motivated the study of affine maximal surfaces with admissible singularities.

For affine maximal surfaces with singularities,
Mart\'{i}nez \cite{Mart05_IAmap} first studied the improper affine spheres with admissible singularities.
Here, an \emph{improper affine sphere} is a special affine maximal surface that is locally
given by the trivial solution $\phi(x,y)$ of \eqref{maxeq}, 
which satisfies the elliptic Monge--Amp\'ere equation
\begin{equation}
\rho^{-4/3} = \det(\Hess \phi) = \phi_{xx}\phi_{yy}-\phi_{xy}^2 = 1.
\end{equation}
Hence, he generalized the concept of improper affine spheres to \emph{improper affine maps}, which admit certain kinds of singularities, and derived a Weierstrass-type representation formula for these surfaces by using the representation formula for improper affine spheres (\cite{FMM96}, \cite{FMM99}).
Since Nakajo \cite{Nakajo09} proved that any locally strongly convex improper affine map is a wave front,
these surfaces are also referred to as the \emph{improper affine fronts}.
Moreover, Mart\'inez defined the notion of completeness for improper affine fronts like other classes of surfaces with singularities (e.g., the flat surfaces in hyperbolic 3-space (\cite{KUY04}), maximal surfaces in Lorentz--Minkowski 3-space (\cite{UY06_maxface}), constant mean curvature $1$ surfaces in de Sitter 3-space (\cite{Fujimori06}))  and investigated their global properties.
In particular, he showed that complete improper affine fronts satisfy an Osserman-type inequality.
His work led to the application of the Weierstrass-type representation for improper affine fronts, which has been used to clarify various geometric properties of such surfaces up to the present day (\cite{Mart05_Relatives}, \cite{ACG07_Cauchy}, \cite{completeness_lemma}, \cite{KN12}, \cite{Kawa13}, \cite{Milan13}, \cite{MM14},\cite{MM14_geometric_aspects}, \cite{MMT15}, \cite{Kodachi21}, \cite{KK24}, \cite{Matsu24}).

In the more general case, Aledo, Mart\'{i}nez, and Mil\'{a}n (\cite{AMM09_affine_Cauchy}, \cite{AMM09_Non_remo}) firstly studied the affine maximal surface with singularities that are not necessarily improper affine fronts.
Later, in \cite{AMM09_max_map}, they defined a concept of affine maximal surfaces with admissible singularities, 
which are called \emph{affine maximal maps} (Definition \ref{def:maximal map}), 
by extending the following Weierstrass-type representation formula (\cite{Calabi88}) for affine maximal immersions:
\begin{equation}
\psi = 2\Re\left( i\int (\Phi + \bar\Phi) \times d\Phi\right) : \Sigma \to \R^3,
\end{equation}
where $\Phi$ is a (not necessarily single-valued) $\C^3$-valued holomorphic map on a Riemann surface $\Sigma$
satisfying the following \emph{period conditions}:
\begin{equation}
(1)\
N\coloneqq \Phi + \bar \Phi \text{ is single-valued}
\qquad
\text{and}
\qquad
(2)\
\Re\left( i\int_c (\Phi + \bar\Phi) \times d\Phi\right) = 0
\end{equation}
for any closed curve $c$ on $\Sigma$.
The map $N : \Sigma \to \R^3$ is called the \emph{conormal map} of $\psi$.
The Weierstrass-type representation has played important roles in the study of affine maximal surface theory (\cite{Calabi88}, \cite{Li89}, \cite{Li89b}).
In particular, using this formula,
they defined the notion of completeness with respect to the affine metric  for affine maximal maps (Definition \ref{hcomp});
this concept generalizes the classical notion of affine completeness.
They investigated the embeddedness of complete ends, extended the affine Bernstein theorem to the complete regular affine maximal maps with only one embedded end, and characterized such surfaces with only two complete regular embedded ends in \cite{AMM11}. %

On the other hand, when applying the Weierstrass-type representation formula, one must solve the problem: \emph{for given holomorphic data on a Riemann surface, does it satisfy a period condition?}
This problem is often referred to as the \emph{period problem}.
However, the period problem for affine maximal maps differs from that 
for other surfaces which admit the Weierstrass-type representation
because not only is the map $\Phi$ not necessarily single-valued, but also $(\Phi + \bar \Phi)\times d\Phi$ is neither single-valued nor necessarily holomorphic (meromorphic).
Moreover, the method of dealing with a certain kind of Gauss map as a meromorphic function on a Riemann surface is not yet known for affine maximal maps.

This paper aims to define a special class of affine maximal maps, 
which we call \emph{affine maxfaces} (Definition \ref{afmaxfacedef}), 
and to relate the theory of affine maxfaces to that of Euclidean minimal surfaces.
The paper is organized as follows:
In section \ref{preliminary}, we review the fundamental theory of affine immersions and some results for affine maximal maps
by Aledo, Mart\'inez, and Mil\'an.
Next, in section \ref{affine maxface section}, we define the affine maxfaces.
In this class, assuming that the affine conormal map $N$ is a Euclidean conformal minimal immersion in $\R^3$,
one can apply Euclidean minimal surface theory to affine maxfaces.
In particular, the affine Gauss map can be identified with a meromorphic function on a Riemann surface.
Then, one can see that affine maxfaces are locally in a class that excludes non-trivial improper affine fronts (Theorem \ref{localaffinemaxface}).
In section \ref{singularities affine maxface}, we first show that all affine maxfaces are wave fronts (Proposition \ref{maxfacefront}).
Additionally, we give criteria of some kinds of singularities, which are cuspidal edges and swallowtails,
for affine maxfaces (Propositions \ref{maxcusp} and \ref{maxswa})
by applying the criteria of \cite{KRSUY05}.
In section \ref{complete affine maxface}, we study a global property of affine maxfaces.
When we consider them, the two different metrics naturally arise, which are the affine metric and the first fundamental form of the conormal map $N$ as a Euclidean minimal surface.
Hence, in studying global theory, it is natural to consider the relationship between the two kinds of completeness of these metrics.
We say that the completeness of the first fundamental form of $N$ is called \emph{weak completeness} (Definition \ref{weakcompdef}) and find that weak completeness is weaker than the completeness of the affine metric (Lemma \ref{weakcomplem} and Proposition \ref{afcplweakcpl}).
Moreover, we show that an affine maxface that is either complete regular (section \ref{about cpl maxmap}),
or both weakly complete and of finite total curvature, satisfies an Osserman-type inequality (Theorem \ref{OsineqT} and Remark \ref{OsineqR}).
As an application of the inequality, we prove that complete affine maxfaces define a new subclass
of complete affine maximal maps, which does not contain non-trivial complete improper affine fronts
(Theorem \ref{maxfaceIArel}).
Furthermore, it also follows from the proof of Theorem \ref{maxfaceIArel} that 
 any complete affine maxface with constant affine Gauss map is the elliptic paraboloid.
Finally, in section \ref{Examples}, we provide examples of affine maxfaces obtained from
Euclidean minimal surfaces.%@

\begin{acknowledgements}
The author would like to express his gratitude to Kotaro Yamada for his helpful advice and comments on this research.
This work was supported by JST SPRING, Japan Grant Number JPMJSP2180.
\end{acknowledgements}

\enlargethispage{\baselineskip}

\section{Preliminaries}\label{preliminary}

\subsection{Foundations of affine immersions}\label{1:fundation}

First, we will briefly review some definitions and fundamental facts about the geometry of affine immersions in unimodular affine 3-space $\R^3$ (see \cite{LSZ93}, \cite{LSZH15}, and \cite{NS94} for details). 
Let $\Sigma$ be a connected and orientable
2-manifold, $\psi \colon \Sigma \to \R^3$ an immersion, and $\xi$ a vector field of $\R^3$ along $\psi$ which is transversal to $d\psi(T\Sigma)$. 
Then, there uniquely exist a torsion-free affine connection $\nabla$, a symmetric quadratic form $h$, a $(1,1)$-tensor $S$, and a $1$-form 
$\tau$ on $\Sigma$, which satisfy
\begin{equation}\label{eq:GW}
\left\{
\begin{array}{l}
D_Xd\psi(Y) = d\psi(\nabla_XY) + h(X,Y) \xi,\\
D_X\xi = -d\psi(S(X)) + \tau(X) \xi,
\end{array}
\right.
\end{equation} 
where $D$ is the canonical connection of $\R^3,$ and $X$ and $Y$ are vector fields on $\Sigma$. 
When the quadratic form $h$ is positive definite, the map $\psi$ is said to be \emph{locally strongly convex}. 
Throughout this paper, we only consider the case that $\psi$ is locally strongly convex. 
For a given locally strongly convex immersion $\psi$, 
one can uniquely choose the transversal vector field $\xi$, which satisfies
\begin{equation}
\left\{
\begin{array}{l}\label{xicondition}
D_X\xi = -d\psi(S(X)),\\
\lbrack d\psi(X), d\psi(Y), \xi\rbrack =(h(X,X)h(Y,Y)-h(X,Y)^2)^{1/2},
\end{array}
\right.
\end{equation} 
where $\lbrack \ ,\ ,\ \rbrack$ denotes the determinant function of $\R^3$.
Then, the transversal vector field $\xi$ and the quadratic form $h$ which satisfy \eqref{eq:GW} and \eqref{xicondition} are called the 
\emph{affine normal vector field} and the \emph{affine metric} of $\psi$ with respect to $\xi$ respectively, 
and also the pair $(\psi, \xi)$ (or simply $\psi$)  is called a \emph{Blaschke immersion}. 
For a given Blaschke immersion $\psi : \Sigma \to \R^3$ with an affine normal vector field $\xi$, the  \emph{conormal map} $N : \Sigma \to \R^3$ is defined as an immersion that satisfies the following conditions:
\begin{equation}\label{conormaldef}
\inner N {d\psi} = 0, \qquad 
\inner N \xi = 1,
\end{equation}
where $\inner{\ }{\ }$ denotes the standard Euclidean inner product in $\R^3$.
Moreover, the map $\nu : \Sigma \to \R^3$ defined by
\begin{equation}\label{eq:affine_Gauss_def} 
\nu \coloneqq \fr{\xi}{|\xi|} = \fr{N_u \times N_v}{|N_u \times N_v|},
\end{equation}
where $(u,v)$ is a local coordinate system of $\Sigma$ 
and $|\cdot|$ denotes the standard Euclidean norm on $\R^3$,
is called the (\emph{affine}) \emph{Gauss map} (see \cite[\S 3]{Li89} and \cite[\S 4.2]{LSZ93}), 
This map corresponds to the classical Gauss map of the conormal map $N : \Sigma \to \R^3$. 
Additionally, a Blaschke immersion $\psi: \Sigma \to \R^3$ is called an \emph{affine maximal immersion} if the \emph{affine mean curvature}, which is half of $\tr S$, everywhere vanishes, or equivalently, 
$\Delta_h N = 0$ holds, where $\Delta_h$ is a Laplacian with respect to the affine metric $h$. 
In particular, the Blaschke immersion is said to be an \emph{improper affine sphere} if $S=0$ holds. 
Note that, by definition, any improper affine sphere is an affine maximal immersion.%%@

\subsection{Affine maximal maps and their completeness}\label{about cpl maxmap}
Next, we briefly review the geometry of affine maximal maps (see \cite{AMM09_affine_Cauchy}, \cite{AMM09_max_map}, 
\cite{AMM09_Non_remo}, and \cite{AMM11}).
Firstly, we review the Weierstrass representation of affine maximal immersion (see \cite{Calabi88}).
Let $\psi : \Sigma \to \R^3$ be an immersion with the Euclidean unit normal vector field $N_e$ and the Euclidean second fundamental form $h_e$, and denote by $K_e$ the Euclidean Gaussian curvature of $\psi$. 
Assume that the second fundamental form $h_e$ is definite and the source manifold $\Sigma$ is oriented so that $h_e$ is positive definite. 
Then, we define the positive definite quadratic form $h$, the transversal vector field $\xi$, and the map $N : \Sigma \to \R^3$ by
$
h \coloneqq K_e^{-1/4}h_e,\ 
\xi \coloneqq (1/2)\Delta_h\psi,
$
and
$
N \coloneqq K_e^{-1/4}N_e.
$
Then, the pair $(\psi, \xi)$ defines a locally strongly convex Blaschke immersion with the affine metric $h$ and the conormal map $N$.

We now suppose that a locally strongly convex Blaschke immersion $\psi : \Sigma \to \R^3$ is affine maximal.
Here, we regard the source manifold $\Sigma$ as a Riemann surface whose complex structure is compatible with $h$.
If $\Sigma$ is simply connected, then there exists a holomorphic map $\Phi : \Sigma \to \C^3$ such that
\begin{align}
N &= \Phi + \bar \Phi = 2\Re(\Phi), \label{conormal map}\\  
h &= -2i\left\lbrack \Phi + \bar \Phi, d\Phi, \bar{d\Phi}\right \rbrack, \label{affine metric} \\
\xi &= \fr{\del N \times \bar \del N}{\lbrack N, \del N, \bar \del N \rbrack} 
= \fr{d\Phi \times \bar{d\Phi}}{\left\lbrack \Phi + \bar \Phi, d\Phi, \bar{d\Phi}\right \rbrack}.\label{affine normal}
\end{align}
Here, the exterior derivative $d$ on a Riemann surface splits as $d = \del + \bar \del$, 
where in a local holomorphic coordinate $z$ and 
for a function $f$,
we have $\del f = (\del f/\del z)dz$ and $\bar \del f = (\del f/\del \bar z)d\bar z$. 
By the Lelieuvre formula, we can recover the affine maximal immersion $\psi$ as
\begin{equation}\label{lelif}
 \psi = 2\Re\left( i\int (\Phi + \bar\Phi) \times d\Phi\right) 
= -i\Phi \times \bar\Phi -2\Im\int\Phi\times d\Phi.
\end{equation}

Conversely, let $\Phi : \Sigma \to \C^3$ be a holomorphic map on a Riemann surface $\Sigma$ 
satisfying the following \emph{period conditions}: %the map $\Phi + \bar \Phi$ is single-valued, satisfies $\Re( i\int_c (\Phi + \bar\Phi) \times d\Phi) = 0$ for any closed curve in $\Sigma$
\begin{equation}\label{eq:maxmapperiodcd}
(1)\
N\coloneqq \Phi + \bar \Phi \text{ is single-valued}
\qquad
\text{and}
\qquad
(2)\
\Re\left( i\int_c (\Phi + \bar\Phi) \times d\Phi\right) = 0,
\end{equation}
and 
the quadratic form $-2i\left[\Phi + \bar \Phi, d\Phi, \bar{d\Phi}\right]$ is positive definite. 
Then, the map $\psi : \Sigma \to \R^3$ defined by \eqref{lelif} gives a locally strongly convex 
affine maximal immersion whose conormal map $N$, affine metric $h$, 
and affine normal vector field $\xi$ are given by \eqref{conormal map}, \eqref{affine metric}, and \eqref{affine normal},
respectively.

By the above arguments, we get a complex representation formula of affine maximal immersions 
from harmonic functions or a holomorphic map.
However, when the affine metric $h$ in \eqref{affine metric} vanishes at a point, 
the map $\psi$ in \eqref{lelif} has a singular point there. 
Hence, Aledo, Mart\'{i}nez, and Mil\'{a}n defined a class of affine maximal surfaces
with admissible singularities as follows:

\begin{definition}[\cite{AMM09_max_map}, \cite{AMM11}]\label{def:maximal map}
Let $\Sigma$ be a Riemann surface. 
We say that a map $\psi : \Sigma \to \R^3$ is an \emph{affine maximal map} 
if there exists a holomorphic map $\Phi : \tilde \Sigma \to \C^3$, where $\tilde \Sigma$ stands for the 
universal cover of $\Sigma$, such that
$\left[\Phi + \bar \Phi, d\Phi, \bar{d\Phi}\right]$ is not identically zero, and $\psi$ is given by
\eqref{lelif}.
We also call the map $\Phi$ the \emph{Weierstrass data}, $N \coloneqq \Phi + \bar \Phi$ the \emph{conormal map}, and $h\coloneqq -2i\left[\Phi + \bar \Phi, d\Phi, \bar{d\Phi}\right]$ the \emph{affine metric}.
\end{definition}

The singular point of an affine maximal map corresponds to the point where the affine metric vanishes. 
In addition, as referred to in \cite[Remark 3]{AMM11}, 
if the image of the conormal map is contained in a plane in $\R^3$, then the affine maximal map
is an improper affine front (\cite{Mart05_IAmap}).

\begin{definition}[\cite{AMM09_max_map}, \cite{AMM11}]\label{hcomp}
The affine maximal map $\psi:\Sigma\to\R^3$ with the affine metric $h$ is \emph{complete} if
there exists symmetric $(0,2)$-tensor $T$ with compact support such that $|h|+T$ is a complete Riemannian metric.
\end{definition}

\noindent
This definition extends the notion of affine completeness to possibly degenerate affine metrics, following the approach of \cite{KUY04}.

\begin{fact}[\cite{AMM11}]\label{Humaxmap}
Let $\psi : \Sigma \to \R^3$ be a complete affine maximal map.
The Riemann surface $\Sigma$ is biholomorphic to $\BSigma\setminus\{p_1, \dots, p_n\}$ with  $n\geq1$,
where $\BSigma$ is a compact Riemann surface and $\{p_1, \dots, p_n\}\subset\BSigma$. 
\end{fact}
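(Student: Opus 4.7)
The strategy parallels Mart\'inez's treatment of complete improper affine fronts (\cite{Mart05_IAmap}) and uses the fact that the Weierstrass data $\Phi$ is holomorphic on $\tilde\Sigma$ to extract a Huber-type finiteness conclusion. Definition \ref{hcomp} provides a compact set $K\subset\Sigma$ outside of which $|h|$ itself is a complete Riemannian metric, so the singular locus of $\psi$ sits inside $K$ and $\psi$ restricts to a genuine locally strongly convex Blaschke immersion on $\Sigma\setminus K$ with complete affine metric.

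I would proceed in three steps. First, combining the holomorphicity of $\Phi$ with the completeness of $|h|$ outside $K$, show that $\Sigma$ has at most finitely many topological ends; the underlying reason is that each end carries a non-trivial piece of holomorphic data $d\Phi$ contributing a definite amount to the affine metric via $h=-2i[N,d\Phi,d\bar\Phi]$, so infinitely many ends would be incompatible with the rigidity imposed by holomorphicity and the period conditions \eqref{eq:maxmapperiodcd}. Second, fix a single end $E$ and lift it to a simply connected subdomain of $\tilde\Sigma$ on which $\Phi$ is single-valued. In a conformal coordinate at the end, rewrite $h$ in terms of $d\Phi$ and $N$, and analyze how completeness of $|h|$ constrains the growth of $\Phi$ toward the ideal boundary of $E$. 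Third, use this constraint to rule out that $E$ is conformally a hyperbolic annulus $\{r<|z|<1\}$ with $r>0$: such an end would admit radial paths of finite $|h|$-length terminating at $|z|=1$, contradicting completeness. Hence each end is a punctured disk, and filling in the punctures produces the compact Riemann surface $\BSigma$ with $\Sigma\cong\BSigma\setminus\{p_1,\dots,p_n\}$.

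The main technical difficulty lies in the second and third steps, namely in converting the abstract completeness of $|h|+T$ into a concrete exclusion of non-puncture ends. The subtlety is that $\Phi$ is not globally single-valued on $\Sigma$: only $N=\Phi+\bar\Phi$ is. One must therefore control the monodromy of $\Phi$ around loops in $E$, using period condition (1) of \eqref{eq:maxmapperiodcd}, before the growth and integrability arguments can be applied. This interplay between the multivalued holomorphic data $\Phi$ and its single-valued real part $N$ is what makes the argument genuinely different from the improper affine front case, where the Weierstrass data live directly on $\Sigma$; the full execution of this step is carried out in \cite{AMM11}.
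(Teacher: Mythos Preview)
The paper does not give its own proof of this Fact; it is cited from \cite{AMM11}. However, the paper does supply the key ingredient immediately afterward in the Remark: the Gaussian curvature $K_h$ of the affine metric is non-negative wherever $\psi$ is an immersion. The actual argument (as in \cite{AMM11}, and going back to \cite{Calabi82}, \cite{Li89}) is then short: outside the compact support of $T$, $(\Sigma\setminus K,\,h)$ is a complete Riemannian surface with $K_h\geq 0$, and Huber's theorem \cite{Huber57} (via Cohn--Vossen/Blanc--Fiala) forces such a surface to be conformally a compact surface with finitely many punctures.

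Your sketch omits this curvature ingredient entirely, and without it the strategy does not go through. Step~1 is not an argument: holomorphicity of $\Phi$ and the period conditions \eqref{eq:maxmapperiodcd} by themselves impose no bound on the number of ends---there is no ``definite amount'' contributed by each end unless you already have a curvature or area estimate to sum. Steps~2--3 have the right \emph{shape} for a parabolicity argument, but you give no reason why radial paths in a hypothetical annular end $\{r<|z|<1\}$ would have finite $h$-length; absent control on $K_h$ (or some equivalent growth estimate on $\Phi$), nothing prevents $h$ from blowing up toward \emph{both} boundary circles. Your closing sentence then defers the whole difficulty back to \cite{AMM11}, so what remains is a statement of intent rather than a proof strategy. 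The missing idea is precisely the sign $K_h\geq 0$: once you have it, classical parabolicity theorems do all the work and the monodromy issues you raise about $\Phi$ become irrelevant.
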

\noindent
Each puncture point $p_j$ (or the image of a small neighborhood of it) is called an \emph{end} of the affine maximal map. 
In addition, we call an end $p$  a \emph{regular end} if the differential $d\Phi$ extends meromorphically to $p$, 
and if not, we call the end $p$ an \emph{irregular end}. 
Moreover, we say that a complete affine maximal map is \emph{regular} if all ends are regular ends. 

\begin{remark}
Note that the Gaussian curvature $K_h$ 
with respect to the affine metric $h = -2i\left[\Phi + \bar \Phi, d\Phi, \bar{d\Phi}\right]$ of a locally strongly convex affine maximal immersion $\psi : U\ (\subset \C) \to \R^3$  is non-negative.
In fact, $K_h$ is computed as follows: 
We locally write the affine metric as $h = e^{2\sigma(z)}|dz|^2$, 
where $\sigma \coloneqq (1/2) \log( -2i[\Phi + \bar \Phi, \Phi_z, \bar{\Phi_z}])$.
Then,
\begin{align*}
K_h &= -e^{-2\sigma}\varDelta \sigma 
\qquad \left(\varDelta \coloneqq 4 \fr{\del^2}{\del z \del \bar z}\right)\\
&= - \fr{2\left(\left[\Phi + \bar \Phi, \Phi_{zz}, \bar{\Phi_{zz}}\right]\left[\Phi + \bar \Phi, \Phi_z, \bar{\Phi_z}\right]
-\left[\Phi + \bar \Phi, \Phi_{zz}, \bar{\Phi_z}\right]\left[\Phi + \bar \Phi, \Phi_z, \bar{\Phi_{zz}}\right]\right)}
{-2i \left[\Phi + \bar \Phi, \Phi_z, \bar{\Phi_z}\right]^3}\\
&= \fr{\left|\left[\Phi + \bar \Phi, \Phi_{zz}, \Phi_z\right]\right|^2}{i\left[\Phi + \bar \Phi, \Phi_z, \bar{\Phi_z}\right]^3}.
\end{align*}
The third equality follows from the following identity (see \cite[Lemma 4.2.3.10]{LSZ93}):
$$
[\bm a, \bm b, \bm c][\bm d, \bm b, \bm e] - [\bm a, \bm b, \bm e][\bm d, \bm b, \bm c]
= [\bm a, \bm b, \bm d][\bm c, \bm b, \bm e] 
\qquad
\left(\bm a, \bm b, \bm c, \bm d, \bm e \in \R^3\ \left(\text{or} \  \C^3\right)\right).
$$
Since $h$ is positive definite, one can see that $-2i[\Phi + \bar \Phi, \Phi_z, \bar{\Phi_z}]^2 > 0$,
and so $i[\Phi + \bar \Phi, \Phi_z, \bar{\Phi_z}]^3 > 0$. 
Hence, we get $K_h \geq 0$.
\end{remark}%%

Aledo, Mart\' inez, and Mil\' an \cite{AMM11} also showed the following:
\begin{fact}[\cite{AMM11}]\label{ExafB}
\begin{enumerate}
\item \label{embeddedend}
An end $p$ of a complete regular affine maximal map with Weierstrass data $\Phi = (\Phi_1, \Phi_2, \Phi_3)$ is an embedded end if and only if the $\C^3$-valued meromorphic $1$-form $d\Phi$ has a pole of order $2$ at $p$ and there exists $j\in\{1,2,3\}$ such that $d\Phi_j$ has a pole of order less than $2$ at $p$.
\item\label{ExafB1}
(Extended affine Bernstein theorem)
The elliptic paraboloid is the only complete regular affine maximal map with a unique embedded end.
\end{enumerate}
\end{fact}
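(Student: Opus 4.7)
The plan is to separate the statement into the local characterization, part~(1), and the global rigidity, part~(2), and to handle them in that order.

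For part~(1), the strategy is a local asymptotic analysis at the end $p$. Take a holomorphic coordinate $z$ on a punctured disk around $p$, with $p$ at $z=0$. Since $p$ is a regular end, $d\Phi$ is meromorphic at $z=0$; let $k\geq 1$ be its pole order and write the principal part as $(A\,z^{-k}+\cdots)\,dz$ with $A\in\C^3$ nonzero. Integrating gives the expansion of $\Phi$ (with a possible logarithmic term), and the first period condition in \eqref{eq:maxmapperiodcd} constrains the residue of $d\Phi$. Plug these expansions into the Lelieuvre formula \eqref{lelif} and expand $\psi$ asymptotically in $z$ and $\bar z$. Assuming embeddedness, the combination of completeness and the single-valuedness required by the period conditions allows only $k=2$: a smaller $k$ produces a logarithmic term incompatible with a single-valued embedded end, while $k\geq 3$ produces dominant powers in $\psi$ that wrap the end around infinitely many times. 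With $k=2$, the leading asymptotic of $\psi$ is a quadratic profile in $1/z$, $1/\bar z$ controlled by $A\times\bar A$; the end is embedded precisely when this profile is non-degenerate, and a direct component-by-component check translates this non-degeneracy into the stated criterion that some $d\Phi_j$ has pole of order strictly less than $2$ at $p$.

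For part~(2), apply Fact~\ref{Humaxmap} to write $\Sigma=\BSigma\setminus\{p_1,\ldots,p_n\}$; the hypothesis of a unique embedded end combined with completeness forces $n=1$, and by part~(1), $d\Phi$ extends to a meromorphic $\C^3$-valued $1$-form on $\BSigma$ with a single double pole at $p_1$. Cohn--Vossen's inequality applied to the complete affine metric $h$, together with the non-negativity $K_h\geq 0$ established in the Remark preceding this Fact, forces the genus of $\BSigma$ to be zero, so $\BSigma\cong\Chat$. Normalizing $p_1=\infty$ with affine coordinate $z$, the residue theorem applied componentwise to $d\Phi$ forces its residue at $\infty$ to vanish, so $d\Phi=A\,dz$ for a constant vector $A\in\C^3$ and $\Phi(z)=Az+B$. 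Substituting into \eqref{lelif} and using positivity of the affine metric together with the second period condition in \eqref{eq:maxmapperiodcd}, a short direct computation recovers the elliptic paraboloid up to equiaffine equivalence.

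The main obstacle is part~(1): reading embeddedness off the asymptotic expansion of $\psi$ requires carefully disentangling the mixed holomorphic and antiholomorphic contributions to the Lelieuvre integral, and identifying exactly which coefficient patterns cause distinct sheets of the end to overlap. The genus-zero reduction in part~(2) is a secondary difficulty, but once the single-double-pole structure from part~(1) is established, it reduces to a standard curvature-Euler-characteristic count.
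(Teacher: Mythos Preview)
The paper does not contain a proof of this statement: it is recorded as a \emph{Fact} cited from \cite{AMM11}, with no argument given here. There is therefore no ``paper's own proof'' to compare your proposal against; the result is imported wholesale from Aledo--Mart\'inez--Mil\'an.

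That said, a brief comment on your sketch. Part~(2) has a genuine gap: you invoke Cohn--Vossen for the affine metric $h$, but $h$ is only complete in the sense of Definition~\ref{hcomp}, meaning $|h|+T$ is complete for some compactly supported $T$. The metric $h$ itself may degenerate on the singular set, so it is not a complete Riemannian metric in the classical sense, and Cohn--Vossen's inequality does not apply directly. You would need either to show the map is actually an immersion under the hypotheses, or to use a version of the Gauss--Bonnet/Cohn--Vossen argument adapted to metrics with isolated degeneracies, neither of which is free. In part~(1), the heuristics for ruling out $k=1$ and $k\geq 3$ are plausible but thin: for $k=1$ the first period condition only forces the residue of $d\Phi$ to be purely imaginary, not zero, so a logarithmic term in $\Phi$ is allowed and you must argue embeddedness fails for a different reason; for $k\geq 3$ the ``infinite wrapping'' claim needs the actual leading term of $\psi$ computed from \eqref{lelif}, which mixes $\Phi$ and $\bar\Phi$ nontrivially. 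If you intend to supply an independent proof rather than cite \cite{AMM11}, these points require substantially more detail.
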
%%%@

In the last of this section, we describe an example of an affine maximal maps
given by Aledo, Mart\'inez, and Mil\'an.

\begin{example}{\cite[Section 4]{AMM11}}\label{ex:220}
Let $\Sigma = \C \setminus\{0\}$ and 
\begin{equation}
\Phi = (\Phi_1, \Phi_2, \Phi_3) =  \left(a_1z + \fr{b_1}{z} + c_1,\, a_2z + \fr{b_2}{z} + c_2,\,  k \log z + c_3\right).
\end{equation}
If we choose parameters  $a_1, a_2, b_1, b_2, c_1, c_2, c_3, k\in \C$ to satisfy 
the period condition \eqref{eq:maxmapperiodcd},  we obtain a family of affine maximal maps 
with two complete regular embedded ends at $z = 0$ and $\infty$.
When we set $F_1  \coloneqq \Phi_1, F_2 \coloneqq \Phi_2$ and $F_3 \coloneqq \Phi_3 - k \log$,
the degrees of $F_j\ (j = 1,2,3)$ are $\deg(F_1) = \deg(F_2) = 2, \deg(F_3) = 0$
as holomorphic maps from the Riemann sphere $\C \cup \{\infty\}$ to itself.
In this sense, the above family is called \emph{$220$-type} (Figure \ref{Figure220}).
More generally, they similarly defined families of complete regular affine maximal maps 
that satisfy
$$
0 \leq \deg(F_j) \leq 2, \qquad
\sum^3_{j = 1} \deg(F_j) = 4 \qquad
(j = 1,2,3).
$$
Each $F_j$ is defined on a compact Riemann surface of any genus minus two points, 
and the resulting families are called \emph{canonical examples}.
They also showed that a complete affine maximal map with exactly two regular embedded ends is a canonical
example (\cite[Theorem 15]{AMM11}).
\end{example}

\begin{figure}[h]
\begin{center}
\begin{tabular}{c}
\includegraphics[height=45mm]{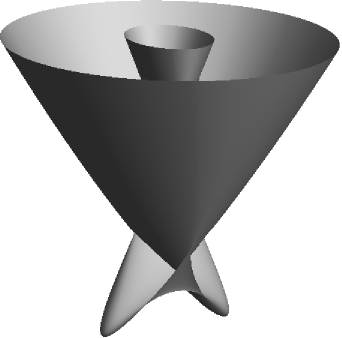}
\end{tabular}
\caption{Affine maximal map of $220$-type ($a_1 = -2i, a_2 = 2, b_1 =1, b_2 = i, c_j = 0, k = 1/2$)}\label{Figure220} 
\end{center}
\end{figure}

\section{Affine Maxfaces}\label{affine maxface section}
In this section, we introduce a special class of affine maximal maps and investigate its relationship with the Euclidean minimal surface theory (see \cite{Osserman}, \cite{LM99} and \cite{AFLminimal}).

\begin{definition}\label{afmaxfacedef}
Let $\psi : \Sigma \to \R^3$ be an affine maximal map on a Riemann surface $\Sigma$ with the affine conormal map $N=\Phi + \bar \Phi $ as in Definition \ref{def:maximal map}.
We say that  the affine maximal map $\psi$ is the \emph{affine maxface} if the conormal map $N$ is a Euclidean 
conformal minimal immersion in $\R^3$.
We also call the conormal map $N$ the \emph{minimal conormal map}.
\end{definition}

\noindent
The key point of Definition \ref{afmaxfacedef} is that the conormal map is a conformal immersion.
This is because, on the set of regular points, a map to be affine maximal is equivalent to the conormal map
to be harmonic with respect to the affine metric as described in Section \ref{1:fundation}.
The term ``maxface" was initially coined for maximal surfaces with singularities in Lorentz–Minkowski 3-space in \cite{UY06_maxface}.

Now, let us consider an affine maxface $\psi : \Sigma \to \R^3$ 
defined on a simply connected Riemann surface $\Sigma$.
Then, since its minimal conormal map $N = \Phi + \bar \Phi$ is a Euclidean conformal minimal immersion in $\R^3$, 
it can be represented by the classical Weierstrass representation for Euclidean minimal surfaces 
(\cite[Lemmas 8.1 and 8.2]{Osserman} and \cite[Theorem 2.3.4]{AFLminimal}) as follows:
\begin{align}
N &= \Phi + \bar \Phi  = \Re \int (1-g^2, i(1+g^2), 2g) \omega \quad : \Sigma \to \R^3,  \label{conomini}\\
\Phi &=  \fr{1}{2} \int (1-g^2, i(1+g^2), 2g) \omega \quad :\Sigma \to \C^3, \label{nullcurve}
\end{align}
where $g$ is a meromorphic function, and $\omega$ is a holomorphic $1$-form on $\Sigma$.
In addition, the Euclidean first fundamental form $d\sigma^2$ of $N$ is 
\begin{equation}\label{ffofN}
d\sigma^2 = (1+|g|^2)^2|\omega|^2.
\end{equation}%%
On the other hand, the Euclidean unit normal vector field $\nu$ of the minimal conormal map $N$ is given by
\begin{equation}
\nu = 
\fr{N_z \times N_{\bar z}}{|N_z \times N_{\bar z}|}
= 
\left(
\fr{2\Re(g)}{1+|g|^2}, \fr{2\Im(g)}{1+|g|^2}, \fr{-1+|g|^2}{1+|g|^2} 
\right)
= \Pi \circ g,
\end{equation}
where the map $\Pi : \C\cup\{\infty\} \to S^2$ is the stereographic projection from the north pole of the unit sphere $S^2$ centered at the origin in $\R^3$.
The map $\nu : \Sigma \to S^2$ corresponds to the (affine) Gauss map 
(see \eqref{eq:affine_Gauss_def}). 
Hence, we also refer to the meromorphic function $g$ the (\emph{affine}) \emph{Gauss map} of the affine maxface.
Note that, in general, even if the affine Gauss map of a Blaschke immersion is constant, the immersion is not 
necessarily trivial.
In fact, the improper affine spheres always admit the constant affine Gauss map.   
Moreover, the affine metric $h$ is given by 
\begin{equation}\label{afmetmaxface}
h=-2i[\Phi + \bar\Phi, d\Phi, \bar{d\Phi}]=\inner{N}{\nu}d\sigma^2,
\end{equation}
and the singular point corresponds to the point where  it holds that 
\begin{equation}\label{sing1}
\inner{N}{\nu}=0.
\end{equation}%%

\begin{remark}
When constructing an affine maximal map, we must consider the period conditions \eqref{eq:maxmapperiodcd}
if the source Riemann surface $\Sigma$ is not simply connected.
For affine maxfaces, those period conditions are equivalent to 
\begin{equation}
\Re \int_{C_1} (1-g^2, i(1+g^2), 2g) \omega=0\\
\end{equation}
and
\begin{equation}\label{maxperiod}
\Re\left( i\int_{C_2} (\Phi + \bar\Phi) \times d\Phi\right)=0,
\end{equation}
where $C_1$ and $C_2$ are any closed curves in $\Sigma$. 
Thus, to construct an affine maxface,  one needs to choose complex constants of the integration in \eqref{nullcurve} to satisfy \eqref{maxperiod}.
\end{remark}

\begin{comment}
\begin{remark}
In the above argument, if $\Sigma$ is not simply connected, then the map $\Phi$ is not necessarily single-valued. 
Then, we must consider two \emph{period conditions}--one for the conormal map $N$ as the conformal minimal immersion, and the other for the affine maximal map $\psi$.
That is, the affine maxface $\psi :\Sigma \to \R^3$ is well-defined if and only if both of the following conditions hold:
\begin{align}
&\Re \int_{C_1} (1-g^2, i(1+g^2), 2g) \omega=0,\\
&\Re\left( i\int_{C_2} (\Phi + \bar\Phi) \times d\Phi\right)=0 \label{maxperiod}
\end{align}
where $C_1$ and $C_2$ are any closed curves in $\Sigma$. 
Therefore, when constructing an affine maxface,  we need to choose complex constants of the integration in \eqref{nullcurve} to satisfy \eqref{maxperiod}.
\end{remark}
\end{comment}

A trivial example of an affine maxface is as follows:

\begin{example}[Elliptic paraboloid]\label{ex:elliptic paraboloid}
Let $\Sigma  = U\subset \C$ be a simply connected domain, and let  
\begin{equation}
g=0,\qquad  \omega =dz.
\end{equation} 
Then, the map $\Phi$ is given by 
$$
\Phi = \fr{1}{2}(z+c_1,\,  iz + c_2,\,  c_3) \qquad (c_1, c_2, c_3 \in \C),
$$
and the minimal conormal map $N:\Sigma\to\R^3$ gives the plane as the Euclidean minimal immersion.
It is easy to check that the affine maxface given by this $\Phi$ is equiaffinely equivalent to a piece of  the elliptic paraboloid
for any choice of $c_1, c_2,$ and $ c_3$.
\end{example}%%

At the end of this section, we will prove that, locally, any affine maxface that is also an improper affine front must be trivial.

\begin{theorem}\label{localaffinemaxface}
Let $U \subset \C$ be a simply connected domain and $\psi : U \to \R^3$ an affine maxface. 
The affine maxface is an improper affine front if and only if the image $\psi(U)$ 
is contained in an elliptic paraboloid.
\end{theorem}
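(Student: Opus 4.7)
My plan is to reduce the problem to the remark following Definition \ref{def:maximal map}, which states that an affine maximal map is an improper affine front if and only if the image of its conormal map is contained in a plane of $\R^3$. Since the conormal map $N=\Phi+\bar\Phi$ of an affine maxface is a Euclidean conformal minimal immersion, $N(U)$ lies in a plane exactly when the Gauss map of $N$ is constant, which by \eqref{conomini} is equivalent to the meromorphic function $g$ being a constant. The task thus reduces to showing that, for an affine maxface on a simply connected domain, $g$ is constant if and only if $\psi(U)$ is contained in an elliptic paraboloid.

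For the forward direction, assume $g$ is constant and set $\bm a:=\tfrac12(1-g^2,\,i(1+g^2),\,2g)\in\C^3$. Let $F:=\int\omega$ be a holomorphic primitive on the simply connected $U$, so that $\Phi=\bm a F+\bm c$ for some $\bm c\in\C^3$. I would substitute this into the Lelieuvre formula \eqref{lelif}. Since $\bm a\times\bm a=0$, the integrand collapses to
\begin{equation*}
(\Phi+\bar\Phi)\times d\Phi=(\bar{\bm a}\times\bm a)\,\bar F\,dF+(2\Re\bm c\times\bm a)\,dF.
\end{equation*}
Writing $\bm a=\bm p+i\bm q$ with $\bm p,\bm q\in\R^3$, one computes $i(\bar{\bm a}\times\bm a)=-2(\bm p\times\bm q)$, and $\Re\int\bar F\,dF=\tfrac12|F|^2+\mathrm{const}$. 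Taking twice the real part of $i$ times the integral then gives
\begin{equation*}
\psi=\bm A(u^2+v^2)+u\,\bm B+v\,\bm C+\bm D,\qquad u=\Re F,\ v=\Im F,
\end{equation*}
with $\bm A=-2(\bm p\times\bm q)$, $\bm B=-4\,\Re\bm c\times\bm q$, and $\bm C=-4\,\Re\bm c\times\bm p$.

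The next step is to verify that $\{\bm A,\bm B,\bm C\}$ is a basis of $\R^3$. Because $\bm a\cdot\bm a=0$ and $d\Phi=\bm a\,dF$, the affine metric in Definition \ref{def:maximal map} reduces to $h=-2i[2\Re\bm c,\bm a,\bar{\bm a}]\,|dF|^{2}$, so non-triviality of $h$ is equivalent to $\Re\bm c\cdot(\bm p\times\bm q)\neq 0$, i.e.\ $\{\bm p,\bm q,\Re\bm c\}$ is a basis of $\R^3$. This immediately yields $\bm A\cdot\Re\bm c\neq 0$, so $\bm A$ is transverse to the plane $(\Re\bm c)^{\perp}$ that contains both $\bm B$ and $\bm C$; moreover, $\{\bm p,\bm q,\Re\bm c\}$ being a basis forces $\bm B$ and $\bm C$ to be linearly independent (otherwise a non-trivial combination of $\bm p$ and $\bm q$ would be parallel to $\Re\bm c$). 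In the affine coordinates $(X_1,X_2,X_3)$ dual to $\{\bm B,\bm C,\bm A\}$ with origin $\bm D$, the surface then satisfies $X_3=X_1^2+X_2^2$, so $\psi(U)$ lies in an elliptic paraboloid.

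For the converse, suppose $\psi(U)$ is contained in an elliptic paraboloid. The paraboloid is an improper affine sphere with constant affine normal $\xi$, so the defining relation $\langle N,\xi\rangle=1$ forces the conormal map of $\psi$ to lie in the plane $\{v\in\R^3:\langle v,\xi\rangle=1\}$ at every regular point. Since $N=\Phi+\bar\Phi$ is real-analytic on all of $U$ and the regular set is open and dense, $N(U)$ lies in this plane globally, and the remark recalled above yields that $\psi$ is an improper affine front. The main obstacle I expect is the algebraic bookkeeping in the cross-product computation of the forward direction and the careful translation of non-triviality of the affine metric into linear independence of $\{\bm A,\bm B,\bm C\}$; once those are in hand, the identification of the image with the paraboloid is immediate.
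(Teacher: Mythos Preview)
Your proof is correct and, for the ``only if'' direction, follows the same reduction as the paper: both arguments pass from ``$\psi$ is an improper affine front'' to ``the conormal image $N(U)$ lies in a plane'' to ``$g$ is constant.'' The endgames differ. The paper normalizes (by an equiaffine motion) to $g\equiv 0$, then uses positive definiteness of $d\sigma^2=|\omega|^2$ to change coordinates so that $\omega=dz$, and simply cites Example~\ref{ex:elliptic paraboloid}. You instead keep $g$ an arbitrary constant and carry out the Lelieuvre computation directly, extracting the paraboloid from the linear-algebra fact that $\{\bm p,\bm q,\Re\bm c\}$ is a basis exactly when the affine metric is non-trivial. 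Your route is more computational but self-contained; the paper's is shorter because the normalization absorbs the cross-product bookkeeping.

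Two minor points. First, you invoke the remark after Definition~\ref{def:maximal map} as a biconditional, but the paper only records the implication ``$N(U)$ planar $\Rightarrow$ improper affine front.'' The reverse implication (which you need to conclude $g$ constant) is exactly what the paper establishes in its proof via Mart\'inez's representation $N=(\bar F-G,1)$; you should either cite that representation or give the one-line justification yourself. Second, your argument for the ``if'' direction (paraboloid $\Rightarrow$ improper affine front), which the paper dismisses as ``clear,'' is a genuine addition: using that the Blaschke conormal is intrinsic to the immersion at regular points, the constant affine normal of the paraboloid forces $\langle N,\xi\rangle=1$ on the dense regular set, hence everywhere by real-analyticity. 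This is the right way to make that step explicit.
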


\begin{proof}
Since the ``if'' part is clear,
we have to show the ``only if'' part.
Assume that $\psi : U \to \R^3$ is an improper affine front. 
Without loss of generality, we may choose the affine normal vector field $\xi$ as $\xi \equiv (0,0,1)$ on $U$. 
By \cite{Mart05_IAmap}, there exists a holomorphic regular curve $(F, G) :U \to \C^2$ (i.e., $F$ and $G$ are holomorphic functions on $U$ satisfying $(dF, dG) \neq (0,0)$), which is the Weierstrass data of 
$\psi$ as the improper affine front, such that $N=(\bar F-G, 1)\in \C\times \R=\R^3$. 
Hence, since the image $N(U)$ lies on a plane $\{(x^1,x^2,x^3)\in\R^3\, ;\,  x^3=1\}$, the Gauss map $g$ is constant. 
Thus, without loss of generality, we may set $g \equiv 0$ on $U$.

On the other hand, since the metric $d\sigma^2 =(1 + |g|^2)^2 |\omega|^2 =  |\omega|^2 %= |\hat \omega(z)|^2|dz|^2
$ is positive definite, %where $\hat \omega(z)$
%is a holomorphic function on $U$, $\hat \omega(z)$ is nowhere vanishing.
%Hence, 
by changing a coordinate on $U$, we can take $\omega = dz$.
Therefore, from \eqref{lelif}, the pair $(g,\omega) = (0, dz)$ locally gives the elliptic paraboloid (see Example \ref{ex:elliptic paraboloid}).
\end{proof}%%%@

\section{Singularities of affine maxfaces}\label{singularities affine maxface}

In this section, we discuss singularities of affine maxfaces.
In \cite{KRSUY05}, the criteria for singular points of a front in
$\R^3$ to be cuspidal edges or swallowtails were given.
The differential geometry of surfaces with singularities is studied in \cite{SUY09_front} and \cite{yamadasing} in detail.

As a first step, let us recall fundamental concepts and notations of singularities of smooth maps.
Let $f : U \to \R^3$ be a smooth map defined on a domain $U$ in the $uv$-plane $\R^2$.
The map $f$ is said to be a \emph{frontal} if there exists a smooth map
$\bm n :U \to S^2$ such that $\bm n(p)$ is perpendicular to $(df)_p(T_pU) \subset \R^3$ for each $p \in U$.
We call $\bm n$ the \emph{unit normal vector field} of $f$. 
Moreover, for a given frontal $f :U \to \R^3$ with a unit normal vector field $\bm n:U\to S^2$ of $f$,
we say that the frontal $f$ is a (\emph{wave}) \emph{front} (resp. (\emph{wave}) \emph{front} at $p\in U$) if the pair of smooth maps 
$$
(f, \bm n) : U \to \R^3 \times S^2
$$
is an immersion on $U$ (resp. immersion at $p\in U$).
Clearly, an immersion is a front.

Now let $f : U\to\R^3$ be a frontal with a normal vector field $\bm N$, which is not necessarily unit.
A smooth function $\Lambda : U \to \R$ defined by 
\begin{equation}\label{eq:lambda}
\Lambda \coloneqq [f_u, f_v, \bm N]
\end{equation}
is called an \emph{identifier of singularities}.
By definition, a point $p \in U$ is a singular point of $f$ if and only if $\Lambda(p) = 0$.
As a function obtained by multiplying $\Lambda$ by a non-vanishing smooth function 
can play the same role as $\Lambda$, 
it is also called an \emph{identifier of singularities}.
A singular point $p\in U$ is called \emph{non-degenerate} if
$d\Lambda$ does not vanish at $p$.
We assume  $p$ is a non-degenerate singular point.
Then, by the implicit function theorem, there exists a regular curve
$$
c=c(t):(-\epsilon,\epsilon)\to U \qquad (\epsilon > 0)
$$
(called the \emph{singular curve}) such that $c(0)=p$ and the image
of $c$ coincides with the set of singular points of $f$ around $p$.
On the other hand, a nowhere vanishing vector field $\eta$ along the singular curve $c$ such that
$(df)_{c(t)}(\eta(t)) = 0$ for each $t \in (-\epsilon, \epsilon)$ is called a \emph{null vector field}.
Along the singular curve $c(t)$, the null vector field $\eta(t)$ is uniquely determined up to multiplication
by nowhere vanishing functions.
The direction of the vector $\eta(t)$ for each $t$ is called the \emph{null direction} at $c(t)$.
Under these settings, the following criteria for singularites are well known:

\begin{fact}[\cite{KRSUY05}]\label{KRSUYcri}
Let  $p \in U$ be a non-degenerate singular point of a front
$f:U\to \R^3$, $c: (-\epsilon, \epsilon) \to U$ the singular curve with $c(0) = p$,
and $\eta(t)$ a null vector field.
\begin{enumerate}
\item \label{cuspidalfact}
The front $f$ has a cuspidal edge at $p$ 
if and only if  $[\dot c(0), \eta(0)] \neq 0$ ($\dot c\coloneqq dc/dt$), 
where $[\ ,\ ]$ stands for the determinant in $\R^2$.
\item \label{swallowfact}
The front $f$ has a swallowtail at $p$ 
if and only if $[\dot c(0), \eta(0)] = 0$ 
and
$$
\left.\frac{d}{dt}\left[\dot c(t),\eta(t)\right] \right|_{t=0}\neq 0.
$$
\end{enumerate}
\end{fact}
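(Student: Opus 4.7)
The plan is to reduce the front $f$, near the non-degenerate singular point $p$, to one of the two standard front models---the cuspidal edge $(u,v)\mapsto (u,v^2,v^3)$ or the swallowtail $(u,v)\mapsto (3u^4+u^2v,\,4u^3+2uv,\,v)$---by diffeomorphism changes in both source and target. Once such a local equivalence is established, the reverse implications are immediate, since for each model the singular curve and null vector field are explicit and one checks directly that $[\dot c(0),\eta(0)]\ne 0$ for the cuspidal edge and $[\dot c(0),\eta(0)]=0$, $(d/dt)[\dot c,\eta]|_{t=0}\ne 0$ for the swallowtail. The common strategy for the forward implications is: (a) choose source coordinates adapted to $c$ and $\eta$; (b) Taylor-expand $f$ using $df(\eta)=0$ along $c$; (c) use the front condition---the immersion property of $(f,\bm n):U\to\R^3\times S^2$---to control the leading terms; and (d) apply source/target diffeomorphisms to reach the normal form.

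For the cuspidal edge criterion, the transversality $[\dot c(0),\eta(0)]\ne 0$ allows coordinates $(u,v)$ centered at $p$ with $c(t)=(t,0)$ and $\eta\equiv\del_v$ along $c$. Since $f_v\equiv 0$ on the singular curve, one writes $f(u,v)=f(u,0)+v^2G(u,v)$ with $G(u,0)=\frac{1}{2}f_{vv}(u,0)$. Differentiating the identifier $\Lambda=[f_u,f_v,\bm n]$ in the $v$-direction yields $\Lambda_v(p)=[f_u,f_{vv},\bm n](p)$, and the identity $\inner{\bm n}{f_v}=0$ differentiated along $v$ forces $f_{vv}(p)\perp \bm n(p)$. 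Non-degeneracy of $p$ thus forces $f_u(p)$ and $f_{vv}(p)$ to be linearly independent within the tangent plane $\bm n(p)^{\perp}$. A target linear change placing these two vectors along the first two axes, combined with a straightforward source reparametrization absorbing the mixed terms in $G$, brings $f$ to the cuspidal edge normal form.

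For the swallowtail criterion, $[\dot c(0),\eta(0)]=0$ says $\eta(0)$ is tangent to $c$ at $p$, while $(d/dt)[\dot c(t),\eta(t)]|_{t=0}\ne 0$ says the null direction peels away from $\dot c$ with non-zero first-order rate. Writing $\eta(t)=\alpha(t)\dot c(t)+\beta(t)v_0$ in a frame along $c$ with $v_0$ a fixed transverse vector, we have $\alpha(0)\ne 0$ and $\beta'(0)\ne 0$. Extending $\eta$ smoothly to a neighborhood and choosing coordinates that straighten it, one Taylor-expands $f$ along the null leaves and isolates a term of order $v^4$ whose coefficient is controlled by $\beta'(0)$. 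Combining this with the front condition (to keep $(f,\bm n)$ an immersion and rule out cusps of higher order) one recovers the $A_3$-jet of the standard swallowtail, and standard finite determinacy results then promote this jet equivalence to an $\A$-equivalence with the model.

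The hard part will be the swallowtail case. The tangency $[\dot c(0),\eta(0)]=0$ obstructs the direct coordinate straightening that worked for the cuspidal edge, so one must simultaneously track the second-order bending of the singular curve in the source and the third- and fourth-order jets of $f$ in the target; moreover, one has to use the Legendrian lift $(f,\bm n):U\to\R^3\times S^2$ inside the canonical contact structure to distinguish the swallowtail from more degenerate stable front singularities of type $A_{\ge 4}$, which would otherwise be compatible with the 2-jet data alone.
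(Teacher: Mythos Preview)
The paper does not prove this statement: it is quoted as a \emph{Fact} with a citation to \cite{KRSUY05} and used as a black box in the subsequent propositions. There is therefore no in-paper argument to compare your proposal against.

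As for the proposal itself, your outline is essentially the strategy of the original reference: pick source coordinates adapted to $c$ and $\eta$, Taylor-expand $f$ using $df(\eta)=0$ along $c$, exploit the Legendrian immersion $(f,\bm n)$ to control the leading jets, and then invoke $\mathcal A$-equivalence (finite determinacy of $A_2$ and $A_3$) to reach the normal forms. Your cuspidal-edge sketch is sound; in particular the observation that $\Lambda_u(p)=0$ (since $\Lambda\equiv 0$ along $c=\{v=0\}$) forces $\Lambda_v(p)=[f_u,f_{vv},\bm n](p)\ne 0$, together with $f_{vv}(p)\perp\bm n(p)$, is exactly the mechanism that produces the $(u,v^2,v^3)$ model after source/target changes. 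The swallowtail sketch is correct in spirit but leaves the genuinely delicate step implicit: getting from ``a $v^4$ term whose coefficient is governed by $\beta'(0)$'' to the full $A_3$-jet requires tracking several mixed derivatives and using the front condition not only at $p$ but along the singular curve (to kill competing $A_{\ge 4}$ possibilities). If you intend to write this out rather than cite it, that is where the real work lies; the appeal to ``standard finite determinacy results'' should be made precise (e.g.\ the $3$-determinacy of the swallowtail germ under $\mathcal A$-equivalence).
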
%%

Now let us study the singularities of affine maxfaces.
Let $U\subset \C = \R^2$ be a domain with the standard complex coordinate 
$z = u + iv$, $\psi :  U \to \R^3$ an affine maximal map (not necessarily an affine maxface) with the conormal map 
$N=(N_1, N_2, N_3)=\Phi + \bar{\Phi} : U \to \R^3$.
We also denote the singular set of $\psi$ by $\mathcal{S} \subset U$. 
In addition, note that all affine maximal maps are frontals 
since we can take a unit normal vector field $\n \coloneqq N/|N|$.

\begin{lemma}\label{Phiimmersion}
%Assume that $(d\psi)_p \neq 0$ at $p\in \mathcal S$. 
If the holomorphic map $\Phi : U \to \C^3$ is an immersion and $(d\psi)_p \neq 0$ at $p\in \s$, 
then the affine maximal map $\psi$ is a front there.
\end{lemma}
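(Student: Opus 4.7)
The plan is to compute the differential of the Legendrian lift $L=(\psi,\n):U\to\R^3\times S^2$ explicitly and show it is injective at $p$. I would work in a local holomorphic coordinate $z=u+iv$ around $p$ and decompose $\Phi_z = X + iY$ with $X,Y:U\to\R^3$. Using $\psi = 2\Re\bigl(i\int N\times d\Phi\bigr)$ and the reality of $N=\Phi+\bar\Phi$, one obtains $d\psi = iN\times(d\Phi-\bar{d\Phi})$; evaluating on $w=a\del_u+b\del_v$ at $p$ gives
\begin{equation*}
(d\psi)_p(w) = -2\,N(p)\times\bigl(aY(p)+bX(p)\bigr),\qquad (dN)_p(w) = 2\bigl(aX(p)-bY(p)\bigr).
\end{equation*}

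From the first formula the hypothesis $(d\psi)_p\neq 0$ immediately forces $N(p)\neq 0$, so $\n = N/|N|$ is well-defined and smooth in a neighbourhood of $p$. Differentiating $|\n|\equiv 1$ shows $(d\n)_p(w)=0$ if and only if $(dN)_p(w)\in\R\cdot N(p)$. Thus $\psi$ is a wave front at $p$ precisely when no non-zero $w\in T_pU$ satisfies simultaneously
\begin{equation*}
aY(p)+bX(p)=\lambda N(p) \quad\text{and}\quad aX(p)-bY(p)=\mu N(p)
\end{equation*}
for some real $\lambda,\mu$.

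I would then argue by contradiction: suppose such $(a,b)\neq(0,0)$ exists. Rewriting the two conditions as the $\R^3$-valued linear system
\begin{equation*}
\begin{pmatrix} b & a\\ a & -b\end{pmatrix}\begin{pmatrix}X(p)\\ Y(p)\end{pmatrix} = \begin{pmatrix}\lambda\\ \mu\end{pmatrix}N(p),
\end{equation*}
whose scalar coefficient matrix has determinant $-(a^2+b^2)\neq 0$, inversion forces $X(p), Y(p)\in\R\cdot N(p)$, i.e.\ $\Phi_z(p)\in\C\cdot N(p)$. But then $a'Y(p)+b'X(p)\in\R\cdot N(p)$ for every $(a',b')\in\R^2$, so $N(p)\times\bigl(a'Y(p)+b'X(p)\bigr)=0$ identically, meaning $(d\psi)_p\equiv 0$; this contradicts the hypothesis.

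The routine part is the initial differentiation; the substantive step is the $2\times 2$ linear-algebra observation that the coefficient matrix above has determinant $-(a^2+b^2)$, a Cauchy--Riemann-type invertibility that reduces the front condition to the easily checked $\Phi_z(p)\notin\C\cdot N(p)$, which itself is automatic once $(d\psi)_p\neq 0$ (and compatible with $\Phi$ being an immersion at $p$).
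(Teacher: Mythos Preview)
Your argument is correct. Both your proof and the paper's reduce the front condition to showing that $\Phi_z(p)\notin\C\cdot N(p)$: the paper takes a null vector $\bm v\in\operatorname{Ker}(d\psi)_p$, assumes $dN(\bm v)\parallel N$, and asserts that a direct computation yields $\Phi_z\times\bar{\Phi_z}=0$, contradicting the immersion hypothesis on $\Phi$; it then quotes an external lemma (\cite[Lemma~1.6]{FSUY08}) to conclude. Your real/imaginary decomposition $\Phi_z=X+iY$ makes that ``direct computation'' explicit via the $2\times 2$ system with determinant $-(a^2+b^2)$, and you derive the contradiction instead from $(d\psi)_p\neq 0$. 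The net effect is the same, but your route is self-contained (no appeal to \cite{FSUY08}) and, as you note at the end, shows that the hypothesis ``$\Phi$ is an immersion'' is actually not needed once $(d\psi)_p\neq 0$ is assumed---a mild sharpening of the lemma as stated.
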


\begin{proof}
By \eqref{lelif}, the derivative of $\psi$ can be computed as
\begin{equation}\label{dpsi}
d\psi
=
i\left\{
(\Phi+\bar \Phi)\times \Phi_zdz - (\Phi+\bar \Phi)\times \bar{\Phi_z} d \bar z
\right\}.
\end{equation}
Let $\bm v = v_1(\del/\del z)_p + \bar v_1(\del/\del \bar z)_p \in \text{Ker}(d\psi)_p \setminus \{0\} \subset T_p U
\ (v_1 \in \C\setminus \{0\})$. 
Here, we identify the tangent space $T_pU$ of $U$ at $p$ with $\R^2$ and $\C$ as 
\begin{equation}\label{identi}
\zeta=a+i b \in \C
\leftrightarrow (a,b)\in\R^2
\leftrightarrow a \left(\fr{\partial}{\partial u}\right)_p + b \left(\fr{\partial}{\partial  v}\right)_p
\leftrightarrow \zeta \left(\fr{\partial}{\partial z}\right)_p + \bar \zeta \left(\fr{\partial}{\partial \bar z}\right)_p
\in T_pU.
\end{equation}
Then, by \eqref{dpsi}, the vector $\bm v$ must satisfy $v_1(\Phi + \bar \Phi)\times \Phi_z \in\R^3$.
We will show $dN(\bm v) \nparallel N$ at $p$.
If $dN(\bm v) \parallel N$ at $p$, then direct computation gives that $(\Phi_z \times \bar{\Phi_z})(p) = \bm 0$,
which contradicts the assumption that $\Phi$ is an immersion at $p$. 
Hence, we observe $dN(\eta)\nparallel N$ at $p$.
By \cite[Lemma 1.6]{FSUY08}, we see that the affine maximal map $\psi$ is a front at $p$.
\end{proof}%%

Hereafter, let us consider an affine maxface $\psi :  U \to \R^3$ with the minimal conormal map
$N: U \to \R^3$, and let $(g,\omega = \hat \omega(z)dz)$ be the pair as in \eqref{conomini}, where $\hat \omega$ is a holomorphic function on $U$.
Note that $p \in \mathcal S$ if and only if 
\begin{equation}\label{sing2}
N_1(g + \bar g) - iN_2 (g-\bar g) + N_3(-1+|g|^2) =0
\end{equation}
holds at $p$ by \eqref{sing1}.

\begin{proposition}\label{maxfacefront}
For any $p \in \s$, if $(d\psi)_p = 0$, then it holds that $|g(p)| \neq 1$.
Moreover, any affine maxface is a front at each $p \in \s$.  
\end{proposition}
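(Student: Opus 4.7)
My plan is to first unpack the condition $(d\psi)_p = 0$ algebraically and reduce it to a condition involving only the conormal $N$ and the holomorphic derivative $\Phi_z$ at $p$. Differentiating the Lelieuvre-type formula \eqref{lelif} with respect to $z = u + iv$ yields
\[
 d\psi = -2\Im(N\times \Phi_z)\,du - 2\Re(N\times \Phi_z)\,dv,
\]
so $(d\psi)_p = 0$ is equivalent to the vanishing $N(p)\times \Phi_z(p) = 0$ in $\C^3$. I would then exploit the Weierstrass structure of $N$: the vector $\Phi_z = (\hat\omega/2)(1-g^2,\,i(1+g^2),\,2g)$ satisfies the classical isotropy $\Phi_z\cdot \Phi_z = 0$, and it is nowhere vanishing on $U$ because $d\sigma^2 = (1+|g|^2)^2|\omega|^2$ is a Riemannian metric, forcing $\hat\omega(p)\neq 0$ whenever $g(p)$ is finite.

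From these two ingredients I aim to conclude $N(p) = 0$. Indeed, if $N(p)\neq 0$, then $N(p)\times\Phi_z(p) = 0$ forces $\Phi_z(p) = \lambda N(p)$ for some $\lambda\in \C$, and the null condition then gives $0 = \Phi_z\cdot\Phi_z = \lambda^2|N(p)|^2$; since $|N(p)|\neq 0$ one gets $\lambda = 0$, contradicting $\Phi_z(p)\neq 0$. Hence $N(p) = 0$. The principal difficulty is the next step: deriving $|g(p)|\neq 1$ using the additional information that $p\in \s$. I plan to argue by contradiction. Assuming $|g(p)| = 1$, write $g(p) = e^{i\alpha}$ and observe that $\Phi_z(p) = \hat\omega(p)\,e^{i\alpha}(-i\sin\alpha,\,i\cos\alpha,\,1)$, while $N(p) = 0$ forces $\Phi(p)\in i\R^3$. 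The goal is to use these constraints, combined with the non-triviality of $[\Phi+\bar\Phi, d\Phi, \bar{d\Phi}]$ on a neighborhood of $p$ guaranteed by $\psi$ being an affine maximal map, to extract an incompatibility.

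For the ``Moreover'' part I would split into two cases. If $(d\psi)_p\neq 0$, Lemma \ref{Phiimmersion} applies at once, since $\Phi_z$ is nowhere vanishing so $\Phi$ is an immersion. If $(d\psi)_p = 0$, then by the first part $|g(p)|\neq 1$, which makes the third coordinate of the affine Gauss map $\nu = \Pi\circ g$ non-zero at $p$; this provides a preferred frame in which a smooth unit normal $\bm n$ for $\psi$ can be constructed across $p$, and one would verify directly that $(\psi,\bm n)$ has rank two at $p$. The main obstacle I foresee is precisely the passage $N(p) = 0 \Rightarrow |g(p)|\neq 1$: first-order data at $p$ alone does not appear to constrain $|g(p)|$, so the argument must bring in either higher-order information about the expansions of $\Phi$ and $\Phi_z$ at $p$ or the non-degeneracy hypothesis built into the definition of affine maximal map.
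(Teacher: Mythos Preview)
Your reduction $(d\psi)_p=0 \Longleftrightarrow N(p)\times\Phi_z(p)=0$ and the isotropy argument $\Phi_z\cdot\Phi_z=0$ forcing $N(p)=0$ are correct, and in fact cleaner than the paper's route.  The paper instead assumes $|g(p)|=1$, expands $d\psi$ explicitly in components using $\bar g=1/g$, and from $d\psi=0$ derives $N_1=N_2=N_3=0$ by an elementary but longer computation.  So both arguments terminate at the same conclusion $N(p)=0$.

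The obstacle you identify---extracting $|g(p)|\neq 1$ from $N(p)=0$---is not the one you actually face.  In the paper, $N(p)=0$ \emph{is} the contradiction: the paper treats $N\neq 0$ as a standing fact (it takes $\bm n\coloneqq N/|N|$ as the unit normal in the paragraph preceding Lemma~\ref{Phiimmersion}, and in the proof of the proposition cites \eqref{conormaldef}).  Once you accept $N\neq 0$, your argument shows not merely the conditional ``$(d\psi)_p=0\Rightarrow |g(p)|\neq 1$'' but the stronger statement that $(d\psi)_p\neq 0$ at every point of $\s$.  This renders the first assertion of the proposition vacuously true and collapses the ``Moreover'' part to a single line: since $\Phi$ is an immersion (as $\hat\omega\neq 0$) and $(d\psi)_p\neq 0$, Lemma~\ref{Phiimmersion} applies directly.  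The paper, by contrast, still treats the case $(d\psi)_p=0$ in the second part, using $|g|\neq 1$ to rewrite $d\psi$ and then checking $\bm n_z\times\bm n_{\bar z}\neq 0$; your approach makes that case empty.

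In short: your plan is not incomplete---you have already reached the contradiction the paper uses, and you have done so without the extra hypothesis $|g|=1$.  What you are missing is only the recognition that the paper regards $N\neq 0$ as given.
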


\begin{proof}
We carry out this proof at the point $p \in \s$.
Let us show the former part by contradiction.
Suppose that $|g| = 1$.
Then, the equation \eqref{sing2} is equivalent to
%\begin{equation}
%N_1\Re(g) = -N_2\Im(g),
%\end{equation}
%and also equivalent to
\begin{equation}
iN_1\left(\fr{1}{g}+g\right) = N_2\left(\fr{1}{g}-g\right)
\end{equation}
by $|g| = 1$. 
And then, by \eqref{dpsi}, we have
\begin{align*}
d\psi 
&=
\fr{i}{2}\left(
\left(
2gN_2-iN_3(1+g^2),N_3(1-g^2)-2gN_1, iN_1(1+g^2)-N_2(1-g^2)
\right)\omega
\right.\\
&
-
\left.
\left(
2\bar g N_2+iN_3(1+\bar g^2), N_3(1-\bar g^2)-2\bar g N_1, -iN_1(1+\bar g^2)-N_2(1-\bar g^2)
\right)\bar \omega
\right)\\
&= 
\fr{i}{2}\left(
\left(
2N_2-iN_3\left(\bar g+g\right), -N_3\left(g-\bar g\right)-2N_1, iN_1\left(\fr{1}{g}+g\right)
-N_2\left(\fr{1}{g}-g\right)
\right)g\omega
\right.\\
&
-
\left.
\left(
2 N_2+iN_3\left(\bar g+ g\right),  N_3\left(g - \bar g\right)-2 N_1, -iN_1\left(\fr{1}{g}+ g\right)
+N_2\left(\fr{1}{g}- g\right)
\right)\bar{g\omega}
\right)\\
&=
-2\Im\left(
\left(
N_2-iN_3\Re(g), -iN_3\Im(g)-N_1,0
\right)g\omega
\right).
\end{align*}
Since $d\psi =0$, it holds that 
\begin{align*}
(N_2-iN_3\Re(g))g\hat \omega
&=
(-iN_3\Im(g)-N_1)g\hat \omega 
=0,\\
(N_2+iN_3\Re(g))\bar{g\hat \omega}
&=
(iN_3\Im(g)-N_1)\bar{g\hat \omega} = 0. 
\end{align*}
Hence, we get 
$$
\left(N_2^2+N_3^2\Re(g)^2\right)|g|^2|\hat \omega|^2 = 0.
$$
Since $|g| =1$ and $\hat\omega \neq 0$ because of the positive definiteness of the metric $d\sigma^2 = (1+|g|^2)^2|\omega|^2 = 4|\omega|^2$, we obtain 
$N_2 = N_3\Re(g) =0$. 
Similarly, we get $N_1 = N_3\Im(g) = 0$, and thus $N_1 = N_2 = N_3 =0$ holds,
which contradicts that $N \neq 0$ by \eqref{conormaldef}. 
Thus, if $(d\psi)_p = 0$, then we have $|g| \neq 1$ at $p\in\s$.  

We are to show the latter part.
If $(d\psi)_p \neq 0$, then the affine maxface is a front at $p$ by Lemma \ref{Phiimmersion}.
Let us assume $(d\psi)_p = 0$. 
For the unit normal vector field $\bm n = N/|N|$ of $\psi$, direct computations
yield that  
$$
\bm n_z \times \bm n_{\bar z} 
=
\fr{1}{|N|}\nu - \fr{2i}{|N|^3}\Im(N_z\times N) = \fr{1}{|N|}\nu \neq 0
$$
because of $\Im(N_z\times N) = 0$.
In fact, if $d\psi$ = 0, then $|g| \neq 1$ by the formar part.
By \eqref{sing2}, we observe $N_3 = (-N_1(g + \bar g) + iN_2(g - \bar g))/(|g|^2 - 1)$.
Thus we obtain
\begin{multline}\label{dpsin3nai}
d\psi 
=
\fr{i}{2}
\left(\fr{2\Re(g)}{|g|^2-1}, \fr{2\Im(g)}{|g|^2-1}, 1\right) \\
\cdot\left(
\left(iN_1(1+g^2)-N_2(1-g^2)\right)\hat \omega dz
-
\bar{\left(iN_1(1+g^2)-N_2(1-g^2)\right)\hat \omega dz}
\right),
\end{multline}
and thus $(iN_1(1+g^2)-N_2(1-g^2))\hat \omega =0$ holds.
Thus, we find that this equation yields $\Im (N_z \times N) = 0$.
%&-\fr{2i}{|N|^3}\Im\left(\left(
%iN_3(1+g^2)-2gN_2, -N_3(1-g^2)+2gN_1, N_2(1-g^2)-iN_1(1+g^2)\right)\hat \omega\right)
Hence, the unit normal vector field $\bm n : U \to S^2$ is an immersion at $p$, and so is $(\psi, \bm n) : U \to \R^3\times S^2$.

Therefore, the affine maxface $\psi: U \to \R^3$ is a front at $p\in \s$.
\end{proof}%%

\begin{lemma}\label{lem:nondege}
A singular point $p \in \mathcal S$ of an affine maxface is non-degenerate if and only if $((N_1(1-g^2) + i N_2(1+g^2) + 2g N_3)dg)(p) \neq 0$.
\end{lemma}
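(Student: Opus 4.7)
The plan is to rewrite the standard identifier $\Lambda=[\psi_u,\psi_v,N]$ in a form adapted to the Weierstrass data $(g,\omega)$, compute its differential at a singular point, and compare with the condition in the statement. By \eqref{afmetmaxface}, $h=\inner{N}{\nu}\,d\sigma^2$ with $d\sigma^2=(1+|g|^2)^2|\omega|^2$ nowhere degenerate, so the real-valued function
\[
\mu:=\inner{N}{\tilde\nu}=N_1(g+\bar g)-iN_2(g-\bar g)+N_3(|g|^2-1),\qquad
\tilde\nu:=(1+|g|^2)\nu=(2\Re g,\,2\Im g,\,|g|^2-1),
\]
differs from $\Lambda$ by a nowhere-vanishing smooth factor and is therefore itself an identifier of singularities. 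Since $\mu$ is real-valued, the non-degeneracy condition $d\mu(p)\neq 0$ is equivalent to $\mu_z(p)\neq 0$ in any local holomorphic coordinate $z=u+iv$.

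Next I would compute $\mu_z$ by the Leibniz rule. The term $\inner{N_z}{\tilde\nu}$ vanishes identically, either because $\tilde\nu$ is perpendicular to the (complexified) tangent plane of the Euclidean minimal immersion $N$, or by direct cancellation using the Weierstrass--Enneper formula $N_z=\tfrac12(1-g^2,i(1+g^2),2g)\hat\omega$. A short calculation gives $\tilde\nu_z=g_z(1,-i,\bar g)$, so
\[
\mu_z=\inner{N}{\tilde\nu_z}=g_z\,A,\qquad A:=N_1-iN_2+\bar g N_3,
\]
and non-degeneracy at $p\in\s$ becomes $g_z(p)A(p)\neq 0$. Since $g$ is meromorphic, $dg=g_z\,dz$, and the condition in the statement reads $g_z(p)B(p)\neq 0$ with $B:=N_1(1-g^2)+iN_2(1+g^2)+2gN_3$.

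The lemma thus reduces to the algebraic identity
\[
B=(1+|g|^2)\,\bar A\qquad\text{at every }p\in\s,
\]
which I expect to be the main obstacle. My plan is to use the singular condition $\mu=0$ in the form $N_3(|g|^2-1)=-N_1(g+\bar g)+iN_2(g-\bar g)$ to eliminate $N_3$ from both $(|g|^2-1)B$ and $(|g|^2-1)\bar A$; each collapses to an explicit scalar multiple of $N_1(1+g^2)+iN_2(1-g^2)$, and matching coefficients yields the identity when $|g|\neq 1$. The exceptional case $|g|=1$ is handled by substituting $\bar g=1/g$ into $\mu=0$, which gives $N_1(1+g^2)+iN_2(1-g^2)=0$ and hence $B=2\bar A=(1+|g|^2)\bar A$ directly; alternatively this case follows by continuity from the dense subset $\{|g|\neq 1\}\cap\s$. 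Since $1+|g|^2>0$, the identity gives $A(p)\neq 0\Leftrightarrow B(p)\neq 0$ on $\s$, and combining with $g_z(p)\neq 0$ yields the stated equivalence.
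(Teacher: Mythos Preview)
Your argument is correct and rests on the same idea as the paper's proof: take $\inner{N}{\nu}$ (up to a nowhere-vanishing factor) as an identifier of singularities and differentiate, using $\inner{N_z}{\nu}=0$. The only difference is in the normalization. The paper works with the unit vector $\nu$ itself; its derivative
\[
\nu_z=\frac{g_z}{(1+|g|^2)^2}\bigl(1-\bar g^2,\,-i(1+\bar g^2),\,2\bar g\bigr)
\]
already carries the vector $(1-\bar g^2,-i(1+\bar g^2),2\bar g)$, so $\Lambda_z=\inner{N}{\nu_z}$ is directly proportional to $g_z\,\bar B$ with $B=N_1(1-g^2)+iN_2(1+g^2)+2gN_3$, and the lemma follows without any use of the singular condition. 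Your rescaling $\tilde\nu=(1+|g|^2)\nu$ makes the differentiation shorter ($\tilde\nu_z=g_z(1,-i,\bar g)$), but the price is your intermediate quantity $A=N_1-iN_2+\bar gN_3$, which you then have to link to $B$ via the identity $B=(1+|g|^2)\bar A-g\mu$ on the singular set; you verify this correctly, including the $|g|=1$ case. Both routes are short; the paper's is slightly more direct, while yours isolates the role of the singular equation more explicitly.
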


\begin{proof}
One can compute $\psi_u \times \psi_v$ as follows:
\begin{equation}
\psi_u \times \psi_v = -2i\psi_z\times \psi_{\bar z} 
= \inner{N}{\nu}(1+|g|^2)^2|\hat \omega|^2 N.
\end{equation}
Then, one can take an identifier of singularity $\Lambda$ as $\Lambda = \inner{N}{\nu}$.
Now, assume that the equation \eqref{sing2} holds. 
Since $\Lambda_z = \inner{N}{\nu_z}$ because of $\inner{N_z}{\nu} = 0$ and
\begin{equation}\label{nuz}
\nu_z 
=
\fr{g_z}{2(1+|g|^2)^2}
\left(1-\bar g^2, -i(1+\bar g^2), 2\bar g\right),
\end{equation}
we get %$\Lambda_z = (1/2)g_z\hat\omega\inner{N}{N_{\bar z}}|N|$ and 
$$
d\Lambda = 2\Re(\Lambda_z dz)
= \fr{\Re (N_1(1-g^2) + i N_2(1+g^2) + 2g N_3)dg)}{(1+|g|^2)^2} 
%=\fr{|N|}{2}\left(g_z\hat\omega\inner{N}{N_{\bar z}}dz + \bar{g_z\hat\omega \inner{N}{N_{\bar z}}}d\bar z\right).
$$
Hence, a point $p\in \mathcal{S}$ is non-degenerate 
if and only if $ ((N_1(1-g^2) + i N_2(1+g^2) + 2g N_3)dg)(p) \neq 0$.
%$(g_z\hat\omega\inner{N}{N_{\bar z}})(p)\neq 0$.
\end{proof}

\begin{proposition}\label{maxcusp}
Let $p\in \s$ be a non-degenerate singular point of an affine maxface.
Then, the affine maxface has a cuspidal edge at a singular point $p\in\s$ if and only if 
\begin{equation}
((N_1(1-g^2) + i N_2(1+g^2) + 2g N_3)dg)(p) \neq 0
\end{equation}
and 
one of the following conditions holds: 
\begin{enumerate}
\item
if $|g(p)| \neq 1$, then 
\begin{equation}\label{maxfacecusp1}
\Im\left(
\left(N_1(1+g^2)+iN_2(1-g^2)\right)^2\bar{g_z}\hat \omega
\right)(p)
\neq 0.
\end{equation}
\item \label{maxfacecusp2}
If $|g(p)|=1$, then either of the following holds:
\begin{enumerate}
\item 
if $\Re(g(p)) \neq 0$, then
\begin{equation}\label{maxfacecusp2a}
\Im\left(
(N_2-iN_3\Re(g))^2\bar{g_z}g^2\hat \omega \right)(p) \neq 0,
\end{equation}
or 
\item 
if $\Im(g(p)) \neq 0$, then 
\begin{equation}\label{maxfacecusp2b}
\Im\left(
(N_2+iN_3\Im(g))^2\bar{g_z}g^2\hat \omega \right)(p) \neq 0.
\end{equation}
\end{enumerate}
\end{enumerate}
\end{proposition}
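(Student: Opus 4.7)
The plan is to apply the cuspidal edge criterion in Fact~\ref{KRSUYcri}\eqref{cuspidalfact}: at a non-degenerate singular point $p \in \s$, the affine maxface has a cuspidal edge iff $[\dot c(0), \eta(0)] \neq 0$, where $c$ is the singular curve through $p$ and $\eta$ is a null vector field along $c$. Throughout I identify tangent vectors of $U$ with complex numbers via \eqref{identi}, so that the $\R^2$-determinant becomes $[v, w] = \Im(\bar v w)$ for $v, w \in \C$.

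For the singular curve tangent, I take $\Lambda \coloneqq \inner{N}{\nu}$ as the identifier of singularities. From the proof of Lemma~\ref{lem:nondege},
\[
\Lambda_z = \frac{g_z \bar Y}{2(1+|g|^2)^2}, \qquad Y \coloneqq N_1(1-g^2)+iN_2(1+g^2)+2gN_3,
\]
and $\Lambda_z(p) \neq 0$ by non-degeneracy. Under the identification \eqref{identi}, $\dot c(0)$ corresponds to $2i\,\overline{\Lambda_z(p)}$ up to a nonzero real scalar. For the null vector, I use \eqref{dpsi} to write $d\psi(\eta_1\partial_z + \bar\eta_1\partial_{\bar z}) = -2\Im(\eta_1 (\hat\omega/2) V)$ with $V \coloneqq N \times (1-g^2, i(1+g^2), 2g)$, so $\eta_1$ is a null direction iff $\eta_1 \hat\omega V \in \R^3$.

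In Case~(1), $|g(p)|\neq 1$, I solve \eqref{sing2} for $N_3$ and substitute, finding that
\[
V = X \cdot \left(\tfrac{2\Re g}{|g|^2-1}, \tfrac{2\Im g}{|g|^2-1}, 1\right), \qquad X \coloneqq i(1+g^2)N_1-(1-g^2)N_2.
\]
Since the remaining vector factor is real, $\eta_1 = \overline{\hat\omega X}$ is null. In Case~(2), $|g(p)|=1$, condition \eqref{sing2} reduces to $W(p)=0$ where $W \coloneqq N_1(1+g^2)+iN_2(1-g^2) = -iX$, and in particular $V_3 = 0$; writing $g = e^{i\theta}$ and using $1+g^2 = 2g\Re g$ and $1-g^2 = -2ig\Im g$, one verifies that $V_1 = 2g(N_2-iN_3\Re g)$ and $V_2 = -2g(N_1+iN_3\Im g)$ are real multiples of each other. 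Hence $\eta_1 = \overline{\hat\omega V_1}$ (resp.\ $\overline{\hat\omega V_2}$) serves as a null direction when $\Re g(p)\neq 0$ (resp.\ $\Im g(p)\neq 0$).

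Combining yields $[\dot c(0), \eta(0)] = -2\Re(\Lambda_z(p)\eta_1)$. In Case~(1), the identity $Y = -(1+|g|^2)W/(|g|^2-1)$---obtained by substituting the expression for $N_3$ back into $Y$---together with $X = iW$ reduces $[\dot c(0),\eta(0)]$, up to a nonzero real factor, to $\Im(W^2 \bar{g_z}\hat\omega)(p)$, which is \eqref{maxfacecusp1}. In Case~(2), the singular relation $N_1\Re g + N_2\Im g = 0$ (equivalent to $W(p)=0$ when $|g|=1$) lets me express $Y$ as a purely imaginary real-scalar multiple of $V_1$ (resp.\ $V_2$), from which the analogous reduction produces \eqref{maxfacecusp2a} and \eqref{maxfacecusp2b}. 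The main technical obstacle is Case~(2): because $W$ vanishes identically along the singular set when $|g|=1$, the Case~(1) formula degenerates there, so one must locate the alternative scalars $V_1$ and $V_2$, establish their real-proportionality using both $|g|=1$ and the singular condition, and split further according to whether $\Re g(p)$ or $\Im g(p)$ is nonzero (at least one must hold since $|g(p)|=1$).
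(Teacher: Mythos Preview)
Your proposal is correct and follows essentially the same route as the paper: apply Fact~\ref{KRSUYcri}\eqref{cuspidalfact} by computing the singular-curve tangent from $\Lambda_z=\inner{N}{\nu_z}$ and the null direction by factoring $d\psi$ through a real vector, then split into the cases $|g(p)|\neq 1$ and $|g(p)|=1$. Your packaging via $V=N\times(1-g^2,i(1+g^2),2g)$ and the auxiliary scalars $Y,X,W$ is a bit more streamlined than the paper's component-wise computations (which quote the expressions for $d\psi$ already obtained in the proof of Proposition~\ref{maxfacefront}), but the substance---including the reason the case split is forced, namely that $W$ vanishes on the singular set when $|g|=1$---is identical.
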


\begin{proof}
We discuss this proof at $p\in \s$.
By \eqref{dpsi}, one can compute $d\psi$ as 
\begin{multline}\label{dpsi2}
d\psi
=
\fr{i}{2}\left(
\left(
2gN_2-iN_3(1+g^2),N_3(1-g^2)-2gN_1, iN_1(1+g^2)-N_2(1-g^2)
\right)\omega
\right.\\
-
\left.
\left(
2\bar g N_2+iN_3(1+\bar g^2), N_3(1-\bar g^2)-2\bar g N_1, -iN_1(1+\bar g^2)-N_2(1-\bar g^2)
\right)\bar \omega
\right).
\end{multline}
Let $c(t)$ the singular curve with $c(0) = p$.
\begin{enumerate}
\item \label{cusppf:cuspcri1}
Assume $|g| \neq 1$.
%Then, by \eqref{sing2}, it holds that 
%\begin{multline}
%d\psi 
%=
%\fr{i}{2}
%\left(\fr{2\Re(g)}{|g|^2-1}, \fr{2\Im(g)}{|g|^2-1}, 1\right) \\
%\cdot\left(
%\left(iN_1(1+g^2)-N_2(1-g^2)\right)\hat \omega dz
%-
%\bar{\left(iN_1(1+g^2)-N_2(1-g^2)\right)\hat \omega dz}
%\right).
%\end{multline}
Under the identification in \eqref{identi}, by \eqref{dpsin3nai}, 
$$
\eta = \bar{(iN_1(1+g^2)-N_2(1-g^2))\hat \omega}
$$
gives a null direction at $p$.
Differentiating the both sides of $\inner{N}{\nu}(c(t)) = 0$ by $t$, we have
$$
0 = \fr{d}{dt}(\inner{N}{\nu}(c(t)))
= \Re\left(\inner{N}{\nu_z}(c(t))\dot c(t)\right).
$$
This implies that $\dot c(t)$ is perpendicular to $\inner{N}{\nu_z}$ at $c(t)$, that is, proportional to 
$i\bar{\inner{N}{\nu_z}}$. 
Thus, by \eqref{nuz}, one can parametrize $c$ as 
\begin{equation} \label{cdot}
\dot c(t) = i\bar{\inner{N}{\nu_z}} = -\fr{i \bar{g_z}(N_1(1+g^2)+iN_2(1-g^2))}{(1+|g|^2)(-1+|g|^2)}.
\end{equation}
at each $t \in (-\epsilon, \epsilon)$.
Here, by direct calculations, we have 
$$
\left[\dot c, \, \eta\right]
=\Im\left(\bar{\dot c} \, \eta\right) = 
-\fr{\Im\left(
\left(N_1(1+g^2)+iN_2(1-g^2)\right)^2\bar{g_z}\hat \omega
\right)}{2(1+|g|^2)(-1+|g|^2)}.
$$
Hence, by \eqref{cuspidalfact} in Fact \ref{KRSUYcri}, we have \eqref{maxfacecusp1}.

\item \label{cusppf:cuspcri2}
We assume $|g| =1$.
%\begin{enumerate}
%\item \label{cusppf:cuspcri2a}
If $\Re(g) \neq 0$, then \eqref{sing2} is rewritten as
$$
N_1 = -\fr{\Im(g)}{\Re(g)}N_2.
$$
By the computation of $d\psi$ in the proof of Proposition \ref{maxfacefront}, 
\begin{equation}
d\psi = \fr{i}{\Re(g)}
\left((N_2-iN_3\Re(g))g\omega - \bar{(N_2-iN_3\Re(g))g\omega}\right)(\Re(g), \Im(g), 0).
\end{equation}
Hence, 
$$
\eta = \bar{(N_2-iN_3\Re(g))g\hat \omega}
$$
defines the null-direction at $p$. 
In addition, by \eqref{cdot} and \eqref{sing2}, we can parametrized $c(t)$ satisfying
\begin{align*}
\dot c(t) &= \fr{g\bar{g_z}(N_2-iN_3\Re(g))}{4\Re(g)},\\
[\dot c, \eta] = -\Im\left(\dot c \, \bar \eta\right)
&=
-\fr{\Im\left((N_2-iN_3\Re(g))^2\bar{g_z}g^2 \hat \omega\right)}{4\Re(g)},
\end{align*}
and by Fact \ref{KRSUYcri} again, get \eqref{maxfacecusp2a}.
Similarly, we have \eqref{maxfacecusp2b}.
%\end{enumerate}
\end{enumerate}
\end{proof}%

\begin{proposition}\label{maxswa}
Let $p\in \s$ be a non-degenerate singular point of an affine maxface.
Then, the affine maxface has a swallowtail at a singular point $p\in\s$ if and only if 
\begin{equation}\label{swa:nondeg}
(g_z \hat \omega\inner{N}{N_{\bar z}})(p)\neq 0
\end{equation}
and 
one of the following conditions holds: 
\begin{enumerate}
\item
If $|g(p)| \neq 1$, then 
\begin{equation}\label{swa:notcusp1}
\Im\left(
\left(N_1(1+g^2)+iN_2(1-g^2)\right)^2\bar{g_z}\hat \omega
\right)(p)
= 0,
\end{equation}
and 
\begin{equation}\label{swa:cri1}
\Re\left(
\left(
\left(N_1(1+g^2)+iN_2(1-g^2)\right)^2 \bar{g_z}\hat \omega
\right)_z
\left(N_1(1+g^2)+iN_2(1-g^2)\right)g_z
\right)(p)
\neq 0.
\end{equation}
\item
If $|g(p)|=1$, then either of the following holds: 
\begin{enumerate}
\item 
if $\Re(g(p)) \neq 0$, then
\begin{equation}%\label{swa:notcusp2a}
\Im\left(
(N_2-iN_3\Re(g))g^2\bar{g_z}\hat \omega \right)(p) = 0,
\end{equation}
and
\begin{equation}\label{swa:cri2a}
\Im\left(
\left(\left(N_2-iN_3\Re(g)\right)g^2\bar{g_z}\hat \omega\right)_z g \bar{g_z}
\right)(p)
\neq 0
\end{equation}
or 
\item 
if $\Im(g(p)) \neq 0$, then 
\begin{equation}\label{swa:notcusp2b}
\Im\left(
(N_2+iN_3\Im(g))g^2\bar{g_z}\hat \omega \right)(p) = 0.
\end{equation}
and
\begin{equation}\label{swa:cri2b}
\Im\left(
\left(\left(N_1+iN_3\Im(g)\right)g^2 \bar{g_z}\hat \omega\right)_z g \bar{g_z}
\right)(p)
\neq 0.
\end{equation}
\end{enumerate}
\end{enumerate}
\end{proposition}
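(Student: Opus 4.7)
The plan is to apply Fact~\ref{KRSUYcri}~\eqref{swallowfact}: at the non-degenerate singular point $p\in\s$, a swallowtail is characterized by $[\dot c(0),\eta(0)]=0$ together with $\frac{d}{dt}[\dot c(t),\eta(t)]|_{t=0}\neq 0$, where $c(t)$ is the singular curve through $p$ and $\eta(t)$ is a null vector field along it. The first condition is exactly the negation of the cuspidal edge inequality from Proposition~\ref{maxcusp}, so it immediately produces the first (vanishing) equation in each of the three cases of the statement. Moreover, the non-degeneracy reformulation \eqref{swa:nondeg} follows from a direct calculation with \eqref{nullcurve}: since $N_{\bar z}=\bar{\Phi_z}=\tfrac{\bar{\hat\omega}}{2}(1-\bar g^2,\,-i(1+\bar g^2),\,2\bar g)$, one obtains $\inner{N}{N_{\bar z}} = \tfrac{\bar{\hat\omega}}{2}\,\overline{N_1(1-g^2)+iN_2(1+g^2)+2gN_3}$, so $(g_z\hat\omega\inner{N}{N_{\bar z}})(p)\neq 0$ is equivalent to the non-degeneracy condition of Lemma~\ref{lem:nondege}.

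The main work is to translate the derivative condition into \eqref{swa:cri1}, \eqref{swa:cri2a}, and \eqref{swa:cri2b}, reusing the explicit $\dot c(t)$ and $\eta(t)$ derived in the proof of Proposition~\ref{maxcusp}. In the case $|g(p)|\neq 1$, writing $A\coloneqq N_1(1+g^2)+iN_2(1-g^2)$ and $h\coloneqq A^2\bar{g_z}\hat\omega$, we have $[\dot c,\eta]=-\Im(h)/(2(|g|^4-1))$. Since condition~(1) gives $\Im h(p)=0$, the derivative of the real denominator does not contribute at $t=0$, so the quotient rule yields
\[
\left.\tfrac{d}{dt}[\dot c(t),\eta(t)]\right|_{t=0} = -\frac{1}{2(|g(p)|^4-1)}\,\Im\bigl(h_z(p)\dot c(0)+h_{\bar z}(p)\bar{\dot c}(0)\bigr).
\]
Substituting the formula for $\dot c(0)$, expanding $h_z$ and $h_{\bar z}$ (using that $\bar{g_z}$ is antiholomorphic, so $h_z$ carries a factor $\bar{g_z}$), and combining the two contributions with the aid of the relation $\Im h(p)=0$ collapses the right-hand side, up to a nonzero real factor, to the expression in \eqref{swa:cri1}. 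The two $|g|=1$ subcases follow the same template starting from the case-specific $\dot c,\eta$ given in the proof of Proposition~\ref{maxcusp}, and produce \eqref{swa:cri2a} and \eqref{swa:cri2b} respectively.

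The principal obstacle is bookkeeping on the circle $|g|=1$, where the denominator $|g|^4-1$ from case~1 vanishes and the formula $\dot c=i\,\overline{\inner{N}{\nu_z}}$ used there is no longer valid. One must rederive $\dot c$ from the modified form of $d\psi$ obtained via the singular-locus equation \eqref{sing2}, separately in the subcases $\Re g\neq 0$ and $\Im g\neq 0$, which is exactly what is carried out in the corresponding cases of Proposition~\ref{maxcusp}. Once the correct $\dot c$ and $\eta$ are in place, the chain-rule differentiation is routine; one only needs to track the many sign conventions carefully and to use the corresponding vanishing cuspidal equality (the first equation in case~\ref{maxfacecusp2}) to ensure that derivatives of the real denominators $\Re g$ and $\Im g$ do not contribute at $t=0$.
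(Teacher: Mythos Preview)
Your proposal is correct and takes essentially the same approach as the paper: apply the swallowtail criterion of Fact~\ref{KRSUYcri}~\eqref{swallowfact}, reuse the explicit $\dot c$ and $\eta$ from the proof of Proposition~\ref{maxcusp}, and differentiate $[\dot c,\eta]$ along $c$, invoking the equality $\Im h(p)=0$ so that the derivative of the real denominator drops out. The only visible difference is that the paper's displayed computation in case~(1) records just the $h_z\dot c$ contribution before substituting $\dot c$ and passing to \eqref{swa:cri1}, whereas you more carefully keep both chain-rule terms $h_z\dot c+h_{\bar z}\bar{\dot c}$; aside from that the two arguments have the same structure and the same level of detail, and both dispatch cases~(2a) and~(2b) by the identical template.
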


\begin{proof}
Let $p \in \s$ satisfy \eqref{swa:nondeg}. 
We now discuss this proof at $p$. 

(1) 
Assume $|g| \neq 1$ and \eqref{swa:notcusp1}.
Then, from the proof \eqref{cusppf:cuspcri1} in Proposition \ref{maxcusp}, we have
\begin{align*}
\left.\fr{d}{dt}\right|_{t=0} [\dot c, \, \eta]
&=
-\fr{\Im\left(\left(
\left(N_1(1+g^2)+iN_2(1-g^2)\right)^2\bar{g_z}\hat \omega
\right)_z \dot c
\right)}{2(1+|g|^2)(-1+|g|^2)}\\
&= 
\fr{\Re\left(\left(
\left(N_1(1+g^2)+iN_2(1-g^2)\right)^2\bar{g_z}\hat \omega
\right)_z (N_1(1+g^2)+iN_2(1-g^2))
\right)}{4(1+|g|^2)^2(-1+|g|^2)^2}\\
\end{align*}
and obtain \eqref{swa:cri1} by Fact \ref{KRSUYcri}.

The argument in (1) proceeds in exactly the same way as that in (2),
and thus leads to the conclusion.
\end{proof}%%

\section{Complete Affine Maxfaces}\label{complete affine maxface}

In this section, we shall investigate global properties of complete affine maxfaces by applying 
Euclidean minimal surface theory.
To begin with, we examine the relationship between two notions of completeness:
that of the affine metric $h$ in \eqref{afmetmaxface} and that of the first fundamental form $d\sigma^2$ of the minimal conormal map in \eqref{ffofN}.

In the case of a general affine maximal map, Aledo--Mart\'inez--Mil\' an already defined completeness
and regularity in Definition \ref{hcomp} and after Fact \ref{Humaxmap}.
Thus, we say that an affine maxface is \emph{complete} (resp. \emph{regular}) if it is complete 
(resp. regular) as an affine maximal map.

\begin{lemma}\label{weakcomplem}
If an affine maxface is complete, the metric $d\sigma^2$ is complete.
\end{lemma}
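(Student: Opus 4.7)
The plan is to prove the contrapositive: if $d\sigma^2$ is not complete, then for every symmetric $(0,2)$-tensor $T$ with compact support, $|h|+T$ fails to be a complete Riemannian metric. Suppose $d\sigma^2$ is incomplete. Then there exists a divergent path $\gamma\colon[0,1)\to\Sigma$ (i.e., eventually leaving every compact subset) with finite $d\sigma^2$-length, and I aim to show that the same $\gamma$ has finite $(|h|+T)$-length, contradicting the completeness of the affine maxface.

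The key observation is that, by the very definition of an affine maxface, the minimal conormal map $N\colon\Sigma\to\R^3$ is a conformal minimal immersion whose induced first fundamental form is exactly $d\sigma^2=(1+|g|^2)^2|\omega|^2$, as recorded in \eqref{ffofN}. Therefore $N\circ\gamma$ is a curve in $\R^3$ whose Euclidean length coincides with the $d\sigma^2$-length of $\gamma$, which is finite. In particular, $N(\gamma(t))$ stays in a bounded region of $\R^3$, so there exists a constant $M>0$ such that $|N(\gamma(t))|\leq M$ along $\gamma$.

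Combining this uniform bound with the identity $h=\inner{N}{\nu}\,d\sigma^2$ in \eqref{afmetmaxface} and the fact that $\nu$ is a unit vector yields the pointwise estimate
\begin{equation*}
|h|=|\inner{N}{\nu}|\,d\sigma^2\leq |N|\,d\sigma^2\leq M\,d\sigma^2
\end{equation*}
along $\gamma$. Integrating the resulting inequality $\sqrt{|h|(\dot\gamma,\dot\gamma)}\leq \sqrt{M}\sqrt{d\sigma^2(\dot\gamma,\dot\gamma)}$ gives a finite $|h|$-length for $\gamma$. Since $T$ has compact support and $\gamma$ is divergent, there exists $t_0\in(0,1)$ such that $\gamma([t_0,1))\cap\mathrm{supp}(T)=\emptyset$; hence the $(|h|+T)$-length of $\gamma|_{[t_0,1)}$ equals its $|h|$-length, which is finite, while the $(|h|+T)$-length over the compact interval $[0,t_0]$ is trivially finite by smoothness. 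This contradicts the completeness of $|h|+T$.

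The only delicate point is that $h$ is degenerate along the singular set $\mathcal{S}$, so $|h|$ is merely positive semi-definite. However, the comparison $|h|\leq M\,d\sigma^2$ continues to hold on all of $\gamma$ as an inequality of symmetric $(0,2)$-tensors (trivially at points of $\mathcal{S}$, where $|h|=0$), so the length estimate above is valid without further argument. No other obstacle arises, and the implication follows.
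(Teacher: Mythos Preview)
Your proof is correct. The paper takes a direct route: it observes that completeness forces $\inner{N}{\nu}>0$ off a compact set, writes $d\sigma^2 = h/\inner{N}{\nu}$ there, and infers completeness of $d\sigma^2$ from that of $h$. Strictly speaking, that last step needs $\inner{N}{\nu}$ to be bounded above along any divergent path, which the paper does not spell out. Your contrapositive argument supplies precisely this missing ingredient: a divergent curve of finite $d\sigma^2$-length has image $N\circ\gamma$ of finite Euclidean arc length in $\R^3$, so $|N|$---and hence $|\inner{N}{\nu}|\leq|N|$---stays bounded \emph{along that curve}, which is exactly what legitimizes the comparison $|h|\leq M\,d\sigma^2$. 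Both approaches rest on the identity $h=\inner{N}{\nu}\,d\sigma^2$ from \eqref{afmetmaxface}, but yours makes the crucial boundedness step explicit and thereby gives a more self-contained argument.
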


\begin{proof}
Assume that an affine maxface $\psi : \Sigma \to \R^3$ is complete. 
Then, it holds that $\inner{N}{\nu}>0$ outside a compact subset in $\Sigma$. 
Hence, we have
$$
d\sigma^2=\fr{1}{\inner{N}{\nu}}h
$$
and observe that $d\sigma^2$ is complete since $h$ is complete outside the compact set.
\end{proof}%%

\begin{definition}\label{weakcompdef}
An affine maxface is said to be \emph{weakly complete} if the Riemannian metric $d\sigma^2$ is complete.
\end{definition}

\noindent
We note that weak completeness does not necessarily imply completeness (see Example \ref{Enneper}).

\begin{lemma}\label{HOs}
If a complete affine maxface is regular, then the pair $(g,\omega)$ in \eqref{conomini} can be extended meromorphically to the ends, and  the metric $d\sigma^2$ is of finite total curvature. 
\end{lemma}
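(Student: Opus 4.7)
The plan is to extract the meromorphic extension of the Weierstrass pair $(g, \omega)$ from that of $d\Phi$, and then reduce the finite-total-curvature statement to the classical Chern--Osserman theorem for complete minimal surfaces in $\R^3$.

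First, I would invoke Fact \ref{Humaxmap} to identify $\Sigma$ with $\BSigma \setminus \{p_1,\dots,p_n\}$, so that by regularity the $\C^3$-valued $1$-form $d\Phi = (d\Phi_1, d\Phi_2, d\Phi_3)$ extends meromorphically to each $p_j$. A direct comparison with the Weierstrass representation \eqref{nullcurve} yields
\begin{equation*}
\omega = d\Phi_1 - i\,d\Phi_2, \qquad g\,\omega = d\Phi_3.
\end{equation*}
The one point that needs a little care is showing that $\omega$ does not vanish identically in any neighborhood of an end $p_j$. This I would argue from the assumption that $N$ is a conformal immersion on $\Sigma$: then $d\sigma^2 = (1+|g|^2)^2|\omega|^2$ is a bona fide Riemannian metric on $\Sigma$, so $\omega$ is a nontrivial meromorphic $1$-form at each $p_j$, and consequently $g = d\Phi_3/\omega$ extends as a meromorphic function there. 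This gives the meromorphic extension of $(g, \omega)$ to $\BSigma$.

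For the finite-total-curvature claim, Lemma \ref{weakcomplem} yields that $d\sigma^2$ is complete, so $N$ becomes a complete conformal minimal immersion from $\BSigma \setminus \{p_1,\dots,p_n\}$ into $\R^3$ whose Weierstrass data extend meromorphically to the compactification $\BSigma$. The classical Chern--Osserman theorem (\cite{Osserman}, \cite{AFLminimal}) then immediately gives that $N$ is of finite total curvature with respect to $d\sigma^2$. The main obstacle in the whole argument is only the non-vanishing of $\omega$ near the ends; once that is in place, everything else is routine algebra combined with a standard invocation of Euclidean minimal surface theory.
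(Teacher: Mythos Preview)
Your proof is correct and follows essentially the same route as the paper's: invoke Fact~\ref{Humaxmap} and regularity to extend $d\Phi$ meromorphically to $\BSigma$, extract $(g,\omega)$ from it, and then appeal to Euclidean minimal surface theory for finite total curvature. The paper leaves the extraction of $(g,\omega)$ implicit and computes the total curvature directly as $-4\pi\deg(g)$ without passing through Lemma~\ref{weakcomplem}, whereas your explicit formulas $\omega = d\Phi_1 - i\,d\Phi_2$, $g\omega = d\Phi_3$ and the check that $\omega\not\equiv 0$ usefully fill in that step.
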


\begin{proof}
Since the affine maxface $\psi : \Sigma \to \R^3$ is complete, by Fact \ref{Humaxmap}, 
$\Sigma$ is biholomorphic to $\BSigma\setminus\{p_1,\dots,p_n\}$ under the same notation as the fact. 
And since the affine maxface is regular, 
the $1$-form $d\Phi$ is extended meromorphically to each end $p_j\ (j=1,\dots,n),$ 
and so is the pair $(g,\omega)$. 
In particular, since the Gauss map $g$ is a holomorphic map from $\BSigma$ to $\Chat$,  by the same arguments as the theory of Euclidean minimal surfaces
(see \cite[Theorem 9.2]{Osserman} and \cite[Corollary 2.6.6]{AFLminimal}), 
the total curvature with respect to $d\sigma^2$ is 
\begin{equation}\label{tc}
\int_\Sigma K_{d\sigma^2}dA_{d\sigma^2} = -\int_\Sigma \fr{2i dg \wedge d\bar g}{(1+|g|^2)^2}=-4\pi\deg (g)
\in -4\pi \Z_{\geq0},
\end{equation}
where $K_{d\sigma^2}$ and $dA_{d\sigma^2}$ denote the Gaussian curvature and the area element of $d\sigma^2$, and $\deg(g)$ is the degree as the holomorphic map $g : \BSigma \to \Chat$. 
Therefore, the total curvature with respect to $d\sigma^2$ is finite.
\end{proof}

We call the integral in the left-most side of \eqref{tc} the \emph{total curvature} of an affine maxface. 
In addition, we say that an affine maxface is of \emph{finite total curvature}
if the total curvature is finite; 
in other words, the minimal conormal map $N: \Sigma \to \R^3$
as the conformal minimal immersion is complete and of finite total curvature.%%

\begin{proposition}\label{afcplweakcpl}
An affine maxface is complete regular if and only if it is weakly complete, of finite total curvature,
and the singular set is compact.
\end{proposition}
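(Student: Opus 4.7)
The plan is to prove each implication separately. The forward direction is essentially an assembly of Lemmas~\ref{weakcomplem} and~\ref{HOs} with a short observation about Definition~\ref{hcomp}. Weak completeness is Lemma~\ref{weakcomplem} verbatim, and finite total curvature (together with the meromorphic extension of $(g,\omega)$ across each end, hence regularity) is Lemma~\ref{HOs}. For compactness of the singular set $\s$, let $T$ be a compactly supported symmetric $(0,2)$-tensor for which $|h|+T$ is a complete Riemannian metric. On $\Sigma\setminus\supp T$ one has $|h|+T=|h|$, and positive-definiteness of a Riemannian metric forces $|h|$, hence $\inner{N}{\nu}$, to be nonvanishing there; therefore $\s\subset\supp T$ is compact.

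For the reverse direction, the strategy is to apply the Chern--Osserman theorem (\cite[Theorem~9.2]{Osserman}, \cite[Corollary~2.6.6]{AFLminimal}) to the minimal conormal map $N:\Sigma\to\R^3$, which by hypothesis is complete and of finite total curvature. This produces a compact Riemann surface $\BSigma$ and punctures $p_1,\dots,p_n$ with $\Sigma\cong\BSigma\setminus\{p_1,\dots,p_n\}$, together with meromorphic extensions of $g$ and $\omega$ across each $p_j$. Consequently $d\Phi=\tfrac12(1-g^2,i(1+g^2),2g)\omega$ is meromorphic at each $p_j$, so every end is regular in the sense of Section~\ref{about cpl maxmap}. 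To verify the completeness in Definition~\ref{hcomp}, I would choose a relatively compact open neighborhood $W\supset\s$ and a smooth symmetric $(0,2)$-tensor $T$ supported in $\bbar{W}$ that is positive enough to make $|h|+T$ positive definite everywhere. Outside $\bbar{W}$ we have $|h|+T=h=\inner{N}{\nu}\,d\sigma^2$, so completeness of $|h|+T$ reduces to completeness of $h$ in a punctured neighborhood of each end.

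The principal obstacle is this last completeness claim at the ends. Since $\s$ is compact, $\inner{N}{\nu}$ has a definite sign in a punctured neighborhood of each $p_j$, so its asymptotic order of vanishing (or blow-up) is well-defined; combined with the Laurent expansions of $g$ and $\omega$ at $p_j$ provided by Chern--Osserman, this gives precise asymptotics for both $\inner{N}{\nu}$ and $d\sigma^2=(1+|g|^2)^2|\omega|^2$. The delicate case is when $\inner{N}{\nu}\to 0$ at an end, where one must verify that the decay rate is dominated by the blow-up rate of $d\sigma^2$ coming from weak completeness, so that the $h$-length of any divergent path to $p_j$ diverges; this is handled by an end-by-end analysis using the explicit representation of $\Phi$ in terms of $(g,\omega)$.
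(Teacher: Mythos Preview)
Your approach matches the paper's in structure: the forward direction is handled exactly as you say (Lemmas~\ref{weakcomplem} and~\ref{HOs} plus the observation $\s\subset\supp T$), and for the converse the paper likewise invokes the Huber/Osserman package (\cite[Lemmas~9.5 and~9.6]{Osserman}, \cite{Huber57}) to obtain $\Sigma\cong\BSigma\setminus\{p_1,\dots,p_n\}$ with $(g,\omega)$, hence $d\Phi$, extending meromorphically across the ends---which is precisely regularity.

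The only divergence is in the completeness step of the converse. You isolate as the ``principal obstacle'' the possibility that $\inner{N}{\nu}\to 0$ at an end and outline an end-by-end Laurent analysis, which you do not actually carry out. The paper instead dispatches this in one line: outside the compact set $\s$ one may assume $\inner{N}{\nu}>0$, and then the paper simply asserts that there exists $\epsilon>0$ with $\inner{N}{\nu}>\epsilon$, whence $h=\inner{N}{\nu}\,d\sigma^2>\epsilon\,d\sigma^2$ and completeness of $h$ follows immediately from completeness of $d\sigma^2$. So the asymptotic program you sketch is replaced in the paper by a uniform lower bound on $\inner{N}{\nu}$. That said, the paper offers no justification for this lower bound; your instinct that something must be checked at the ends is reasonable, and the assertion $\inner{N}{\nu}>\epsilon$ is exactly what your proposed end-by-end analysis would have to establish (or refute, in which case the growth of $d\sigma^2$ would have to compensate).
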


\begin{proof}
By Definition \ref{hcomp}, Lemmas \ref{weakcomplem} and \ref{HOs}, if an affine maxface is complete regular, then we see that the metric $d\sigma^2$ is complete (i.e., the affine maxface is weakly complete), of finite total curvature, and  the singular set is compact. 

Conversely, suppose that the singular set $\mathcal{S}$ is compact, $d\sigma^2$ is complete, and of finite total curvature. 
We will argue outside $\mathcal{S}$ hereafter.
Without loss of generality, we may suppose that $\inner{N}{\nu}>0$.
Then, there exists $\epsilon>0$ such that $\inner{N}{\nu}>\epsilon$.
Hence, it holds that
$$
h= \inner{N}{\nu}d\sigma^2 > \epsilon d\sigma^2,
$$
and one can observe that $h$ is complete. 
That is, the affine maxface is complete.
On the other hand, by \cite[Lemmas 9.5 and 9.6]{Osserman} (see also \cite{Huber57}), $\Sigma$ is biholomorphic to $\BSigma\setminus\{p_1, \dots, p_n\}$, where $\BSigma$ stands for a compact Riemann surface, and the pair $(g, \omega)$ can be extended meromorphically to each $p_j$ ($j=1, \dots, n$).
Therefore, it follows that the $1$-form $d\Phi$ is meromorphic on $\Sigma$,  which yields the conclusion.  
\end{proof}

Note that even if the minimal conormal map $N$ is complete and of finite total curvature (i.e., so is $d\sigma^2$), the affine maxface associated with $N$ is not necessarily complete (see Example \ref{Enneper}).%

If we assume that an affine maxface is either complete regular, or weakly complete and of finite total curvature, 
then the minimal conormal map is a complete Euclidean conformal minimal immersion of finite total curvature.
Thus, under these assumptions, we can apply the results on complete minimal surfaces of 
finite total curvature to affine maxfaces.

For example, if an affine maxface is complete regular, then applying the original Osserman inequality
 (\cite[Theorem 9.3]{Osserman}), the Jorge--Meeks formula (\cite[Theorem 4]{JM83} and \cite[Theorem 5.4]{LM99}) 
(see also \cite{Sch83}, \cite{Fang99} and \cite{KUY02}) to the minimal conormal map $N$, which is a complete conformal minimal immersion of finite total curvature, 
and \eqref{embeddedend} in Fact \ref{ExafB}, 
we have the following Osserman-type inequality for complete regular affine maxfaces.

\begin{theorem}[Osserman-type inequality]\label{OsineqT}
Let $\psi : \Sigma=\BSigma_\gm\setminus \{p_1,...,p_n\}\to \R^3$ be a complete regular affine maxface, 
where $\BSigma_\gm$ denotes a compact Riemann surface with genus $\gm$. 
Then, the following inequality holds:
\begin{equation}\label{Osineq}
-\fr{1}{2\pi}\int_\Sigma K_{d\sigma^2}dA_{d\sigma^2}=2\deg(g)\geq -\chi(\BSigma_\gm)+2n,
\end{equation}
where $\chi(\BSigma_\gm) = 2-2\gm$ denotes the Euler characteristic of $\BSigma_\gm$. 
Moreover, the equality of \eqref{Osineq} holds if and only if all ends are embedded.
\end{theorem}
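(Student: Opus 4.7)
The plan is to reduce the theorem to known statements about Euclidean minimal surfaces of finite total curvature by exploiting the fact that, for a complete regular affine maxface, the minimal conormal map $N$ is itself a complete Euclidean conformal minimal immersion of finite total curvature with Weierstrass data $(g,\omega)$.

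First, I would invoke Lemma \ref{HOs}, which gives both that $(g,\omega)$ extends meromorphically across every puncture $p_j$ (so that $g$ becomes a holomorphic map $\BSigma_\gm\to\Chat$) and, via \eqref{tc}, the left equality $-\fr{1}{2\pi}\int_\Sigma K_{d\sigma^2}\,dA_{d\sigma^2}=2\deg(g)$ in \eqref{Osineq}. The remaining inequality $2\deg(g)\ge -\chi(\BSigma_\gm)+2n$ is then precisely the classical Osserman inequality applied to the complete finite-total-curvature minimal surface $N$, since $N$ and $\psi$ share the same Weierstrass pair $(g,\omega)$ on $\BSigma_\gm\setminus\{p_1,\dots,p_n\}$.

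Second, for the equality case I would use the Jorge--Meeks formula, which refines the Osserman inequality by writing the defect $2\deg(g)+\chi(\BSigma_\gm)-2n$ as a nonnegative integer measuring the total excess multiplicity of the ends of $N$; equality therefore holds if and only if every end of $N$ is embedded as an end of the minimal surface. To conclude the theorem, one must identify this condition with the condition that every end of $\psi$ is embedded in the sense of Fact \ref{ExafB}~\eqref{embeddedend}. Both notions are controlled by the local pole orders of the components of $d\Phi = \fr{1}{2}(1-g^2,\,i(1+g^2),\,2g)\,\omega$ at $p_j$, and reduce to the same requirement: $d\Phi$ has a pole of order exactly two at $p_j$, and at least one component $d\Phi_j$ has a strictly smaller pole order there.

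The main obstacle will be this last step, matching the two notions of embeddedness. It demands a careful local computation at each puncture comparing the pole orders of the $d\Phi_j$ with the behaviour (pole, zero, or regular value) of $g$ and the pole order of $\omega$ there, so as to verify that the embeddedness criterion for a minimal surface end on $N$ coincides exactly with Fact \ref{ExafB}~\eqref{embeddedend}. Once that identification is in hand, the Jorge--Meeks equality condition for $N$ translates directly into the stated characterization of equality in \eqref{Osineq} for $\psi$.
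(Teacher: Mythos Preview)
Your approach is essentially identical to the paper's, whose entire proof is the paragraph immediately preceding the theorem statement: apply the classical Osserman inequality and the Jorge--Meeks formula to the minimal conormal map $N$ (which is complete of finite total curvature by Lemma~\ref{HOs} and shares the data $(g,\omega)$ with $\psi$), and then invoke Fact~\ref{ExafB}\,\eqref{embeddedend} to pass between embeddedness of the ends of $N$ and of $\psi$. The paper does not spell out the embeddedness matching either, so your flagging of that translation as the one nontrivial step is on target; just be aware that the literal pole-order criterion you wrote down is the one from Fact~\ref{ExafB}\,\eqref{embeddedend}, whereas the Jorge--Meeks/Schoen criterion for an embedded minimal end is simply that $d\Phi$ have a pole of order exactly $2$---reconciling the two uses that for an affine maxface the data $d\Phi=\tfrac12(1-g^2,\,i(1+g^2),\,2g)\,\omega$ always admits, after a rotation sending $g(p_j)$ to $0$ or $\infty$, a component of strictly lower order.
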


\begin{remark}\label{OsineqR}
The inequality \eqref{Osineq} also holds for weakly complete affine maxfaces of finite total curvature. 
\end{remark}%%

As an application of the Osserman-type inequality, 
the following theorem states that the complete affine maxface defines a new subclass of complete affine maximal maps. 
We know that this subclass does not contain any non-trivial complete improper affine fronts.

\begin{theorem}\label{maxfaceIArel}
A complete affine maxface $\psi : \Sigma=\BSigma_\gm \setminus \{p_1,\dots, p_n\} \to \R^3$ is an improper affine front if and only if it is the elliptic paraboloid. 
Moreover, any complete affine maxface with constant affine Gauss map is the elliptic paraboloid.
\end{theorem}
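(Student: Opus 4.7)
Both ``if'' directions are trivial---the elliptic paraboloid is an improper affine sphere (hence an improper affine front) with constant affine Gauss map. For the converse directions, the key observation is that if $\psi$ is an improper affine front, then the affine normal $\xi$ is a constant vector in $\R^3$, and the relation $\inner{N}{\xi}=1$ forces the image of the conormal map $N$ to lie in an affine plane of $\R^3$. Because $N$ is a Euclidean conformal minimal immersion by the definition of an affine maxface, its Euclidean unit normal is then constant, so via $\nu=\Pi\circ g$ the affine Gauss map $g$ must be constant. Both statements therefore reduce to the single claim: \emph{every complete affine maxface with constant affine Gauss map is the elliptic paraboloid.}

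To prove this claim, assume $g\equiv\mathrm{const}$. By \eqref{tc} the total curvature equals $-4\pi\deg(g)=0$, hence is finite, and completeness implies weak completeness via Lemma \ref{weakcomplem}. Thus Remark \ref{OsineqR} applies and yields
\[
0 \;=\; 2\deg(g) \;\geq\; -\chi(\BSigma_\gm)+2n \;=\; 2(\gm+n-1).
\]
Combined with $n\geq 1$ from Fact \ref{Humaxmap}, this forces $\gm=0$ and $n=1$, so $\Sigma\cong\C$. An equiaffine transformation acts on the conormal map (and hence on $\nu$ and $g$) by an element of $SL(3,\R)$, which lets us normalize $g\equiv 0$. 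With this normalization, \eqref{nullcurve} gives $\Phi=(F,iF,0)+(c_1,c_2,c_3)$ for constants $c_j\in\C$, where $F(z)=\tfrac12\int_0^z\omega$ is entire on $\C$, and \eqref{affine metric} reduces to
\[
h \;=\; -8\,\Re(c_3)\,|F'(z)|^2\,|dz|^2.
\]
Non-triviality and positive definiteness of $h$ on the regular set force $\Re(c_3)<0$ and $F'(z)\ne 0$ for every $z\in\C$.

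Completeness of $h$ on $\Sigma=\C$ then makes the flat pulled-back metric $|F'|^2|dz|^2=F^*|dz|^2$ complete on $\C$. The plan for the last step is to argue that $F$ must be affine: the map $F:(\C,|F'|^2|dz|^2)\to(\C,|dz|^2)$ is a holomorphic local isometry between flat Riemannian surfaces with complete source and simply connected target, hence a Riemannian covering; by simple connectivity of the target it is a biholomorphism of $\C$, so $F(z)=\alpha z+\beta$ with $\alpha\ne 0$. Plugging this $F$ back into the Lelieuvre formula \eqref{lelif} produces, after a direct computation, the standard quadratic parametrization of the elliptic paraboloid, completing the proof. The main obstacle is the completeness-to-affineness step, which I would handle using the classification of complete simply connected flat Riemannian $2$-manifolds; the remaining steps are routine calculations.
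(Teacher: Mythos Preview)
Your proof is correct and takes a genuinely different route from the paper in the final step. Both arguments first reduce to the case of constant Gauss map and then use the Osserman-type inequality to conclude $\gamma=0$ and $n=1$, i.e.\ $\Sigma\cong\C$. From there the paper invokes the equality case of Theorem~\ref{OsineqT} to see that the single end is embedded and then appeals to the extended affine Bernstein theorem (Fact~\ref{ExafB}\eqref{ExafB1}), a nontrivial external result of Aledo--Mart\'inez--Mil\'an. You instead compute $h=-8\Re(c_3)\,|F'|^2\,|dz|^2$ directly and use that completeness of $h$ forces the holomorphic local isometry $F\colon(\C,|F'|^2|dz|^2)\to(\C,|dz|^2)$ to be a Riemannian covering of the simply connected plane, hence an affine biholomorphism. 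This is more elementary and self-contained, bypassing both the embeddedness discussion and Fact~\ref{ExafB}\eqref{ExafB1}; it also handles the ``constant Gauss map'' assertion uniformly, whereas the paper's argument implicitly leans on regularity (obtained from \cite{Mart05_IAmap} in the improper-affine-front case) to apply Theorem~\ref{OsineqT}. Your use of Remark~\ref{OsineqR} in place of Theorem~\ref{OsineqT} is the clean way around that issue. One small imprecision: a general equiaffine transformation sends $N$ to $(A^{T})^{-1}N$ with $(A^{T})^{-1}\in SL(3,\R)$, but this need not preserve conformality of $N$; to normalize the constant $g$ to $0$ while keeping $N$ a conformal minimal immersion you should take $A\in SO(3)$, which suffices since rotations act transitively on $S^2$.
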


\begin{proof}
Since the elliptic paraboloid is a complete improper affine front, it suffices to prove the converse.

Assume that $\psi : \Sigma \to \R^3$ is an improper affine front. 
Note that all complete improper affine fronts are always regular (\cite[Proposition 1]{Mart05_IAmap}).
Without loss of generality, we can take the affine normal vector field $\xi$ as $\xi\equiv(0,0,1)$. %on $\Sigma$.  
As in the proof of Theorem \ref{localaffinemaxface},
without loss of generality, we can set $g \equiv 0$ on $\Sigma$.
In other words, the affine Gauss map $g$ is constant.

On the other hand, since $\omega$ is a meromorphic $1$-form on $\BSigma_\gm$ by Lemma \ref{HOs} and has no zeros in $\BSigma_\gm$ by completeness and positive definiteness of $d\sigma^2=(1+|g|^2)^2|\omega|^2=|\omega|^2$, it must have poles at all ends $p_1, \dots, p_n$. %of order greater than or equal to $2$. 
Hence, by the Riemann--Roch theorem, 
$$
2\gm-2=-\chi(\BSigma_\gm)=\sum_{p\in\omega^{-1}(0)}\ord_p\omega+\sum_{p\in\omega^{-1}(\infty)}\ord_p\omega 
=\sum_{p\in\omega^{-1}(\infty)}\ord_p\omega,
$$
where $\omega^{-1}(0)$ and $\omega^{-1}(\infty)$ are the set of poles and the set of zero points 
of $\omega$ in $\BSigma_\gm$, respectively.
Since the rightmost side above is non-positive, we get $\gm \leq 1$.

Suppose that $\gm=1$. The Osserman-type inequality (Theorem \ref{OsineqT}) leads
$$
2\cdot 0\geq -(2-2\cdot 1) + 2n,
$$
so we have $n\leq0$. This contradicts $n \geq1$. 
Hence, we have $\gm=0$.
By the Osserman-type inequality again, we conclude $n\leq1$ and obtain $n=1$ 
since $n \geq 1$ by Fact \ref{Humaxmap}. 
Then, since the equality in the Osserman-type inequality holds, the unique end $p_1$ is the embedded end. 

In conclusion, the extended affine Bernstein theorem (\eqref{ExafB1} in Fact \ref{ExafB}) yields that the complete regular affine maxface $\psi$, being the improper affine front, must be the elliptic paraboloid. 
The latter assertion is clear from the proof.
\end{proof}%%

\section{Examples} \label{Examples}

\begin{example}[Enneper-type affine maxface]\label{Enneper}
Let 
\begin{equation}
\Sigma =  \C,\qquad  g=z,\qquad  \omega =dz.
\end{equation} 
The map $\Phi$ is given by
$$
\Phi = 
\fr{1}{2} \left(z-\fr{z^3}{3} + c_1,\,  i \left(\frac{z^3}{3}+z\right) + c_2, \, z^2 + c_3 \right)  \qquad (c_1, c_2, c_3 \in \C),
$$
and the minimal conormal map $N:\Sigma\to\R^3$ gives the Enneper surface as the Euclidean minimal surface.
This $(g, \omega)$ gives the well-defined weakly complete affine maxface of finite total curvature $-4\pi$. 
However, it is incomplete regardless of the choice of $c_1, c_2$, and $c_3$. 
Indeed, the singular set $\{z \in \C;\, S_1(z) = 0\}$, where
\begin{multline*}
S_1(z) \coloneqq4 \Re(z) \Re\left(c_1-\fr{1}{6} z (z^2-3)\right)-4 \Im(z) \Im\left(\fr{1}{6}z\left(z^2+3\right)
-i c_2\right)\\
   +2 \left(|z|^2-1\right) \Re\left(c_3+\fr{z^2}{2}\right),
\end{multline*}
accumulates at the end $z = \infty$.
This fact is proven as follows.
Let $z = r e^{i \theta}\, (r > 0, \theta\in[0, 2\pi))$ and we have 
$S_1(z) = S_1(r, \theta) = (1/3)r^4 \cos (4\theta) + P(r, \theta)$, where 
$$
P(r, \theta) \coloneqq r^2(\cos(2\theta) + 2 \Re(c_3)) + 4r(\Re(c_1)\cos (\theta) + \Re(c_2) \sin(\theta)) - 2\Re(c_3).
$$
One can observe there exist $k_0, k_1 \in \R_{\geq0},$ and $k_2 \in \R_{>0}$ such that 
$|P(r, \theta)| \leq k_2 r^2 + k_1 r + k_0$.
We set $\theta_\delta ^- \coloneqq \pi/4 - \delta,\, \theta_\delta ^+ \coloneqq \pi/4 + \delta\  (0 < \delta < \pi/4)$, 
and $m \coloneqq (1/3) \sin (2\delta) > 0$.
In addition, let 
$$
R \coloneqq \max \left\{1, \rt{\fr{6k_2}{m}}, \rt[3]{\fr{6k_1}{m}}, \rt[4]{\fr{6k_0}{m}}\right\},
$$
and direct computations give that $S_1(r, \theta_\delta^-) > 0$ and $S_1(r, \theta_\delta^+) < 0$ for any $r > R$. 
Moreover, we define a continuous function $T: [\theta_\delta^-, \theta_\delta^+] \to \R$  
by $T(\theta) \coloneqq S_1(r_0, \theta)$ for a fixed number $r_0 >R$.
Then, there exists a number $\theta_0 \in  [\theta_\delta^-, \theta_\delta^+]$ such that $T(\theta_0) = 0$.
Let us take a sequence $(z_n)_{n\in \Z_{\geq 1}} \subset \C$ defined as $z_n \coloneqq (n+R) e^{i \theta_0}$.
Then, we have $|z_n| \to \infty$ as $n\to \infty$ and  $S_1(z_n) = 0$.
Therefore, the end $z = \infty$ is an accumulation point of the singular set.

Figure \ref{Figure1} shows the picuture of an Enneper-type affine maxface with chosen as $c_1 = c_2 = c_3 = 0$. 
In this case, the singular set $\s$ is $\s = \{z \in \C; \Re(z) = \pm \Im(z)\}$.
By Lemma \ref{lem:nondege} and Proposition \ref{maxcusp}, we see that 
$z = 0$ is the unique degenerate singular point, while all other points in $\s \setminus \{0\}$ are
cuspidal edges. 
\end{example}

\begin{figure}[h]
\begin{center}
\begin{tabular}{c@{\hspace{1.8cm}}c}
\includegraphics[height=38mm]{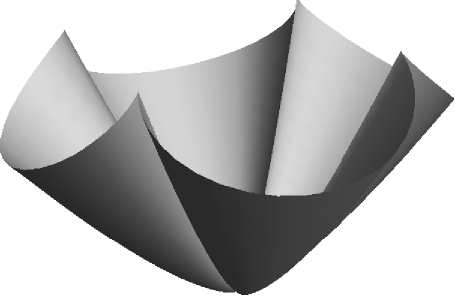}&
\includegraphics[height=40mm]{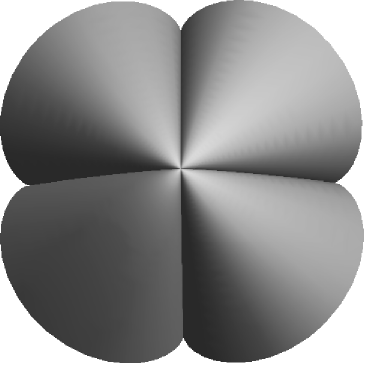}\\

\end{tabular}
\caption{Example \ref{Enneper} : Enneper-type ($c_1 = c_2 = c_3 = 0$) }\label{Figure1}
\end{center}
\end{figure}

\begin{example}[Catenoid-type affine maxface]\label{catenoid}
Let 
\begin{equation}
\Sigma = \C\setminus\{0\},\qquad  g=z,\qquad  \omega =\fr{dz}{z^2}.
\end{equation} 
Then, the map $\Phi$ is given by 
$$
\Phi = \fr{1}{2}\left(-z -\fr{1}{z} + c_1,\, i\left(z -\fr{1}{z} \right) + c_2, \, 2\log z + c_3\right) \qquad (c_1, c_2, c_3 \in \C),
$$
and the minimal conormal map $N:\Sigma\to\R^3$ gives the catenoid as the complete Euclidean minimal immersion.
We must consider the period condition \eqref{maxperiod} in this case.
The affine maxface $\psi = (\psi_1, \psi_2, \psi_3)$ in \eqref{lelif} is given as follows:
\begin{align*}
\psi_1
&=
2(\Re (c_3)+\log |z|)\Re\left(z - \fr{1}{z}\right)-2\Re\left(z + \fr{1}{z}\right) -4(\Re (c_2))(\arg z)\\%-2\Im(\bar c_2 c_3)\\
\psi_2
&=   
2(\Re (c_3)+\log |z|)\Im\left(z + \fr{1}{z}\right)-2\Im\left(z - \fr{1}{z}\right) +4(\Re (c_1))(\arg z)\\%-2\Im(\bar c_3 c_1)\\
\psi_3
&=
\fr{1}{2}\left(|z|^2 - \fr{1}{|z|^2}\right) +2 \log|z|-2(\Re (c_1))\Re\left(z-\fr{1}{z}\right) 
 -2(\Re (c_2))\Re\left(z-\fr{1}{z}\right)
%%- 2\Im(\bar c_1 c_2)
\end{align*}
up to an additive constant vector of $\R^3$.
Hence, we deduce that the affine maxface $\psi$ is well-defined if and only if $\Re (c_1) = \Re (c_2) = 0$.
Therefore, the affine maxface $\psi$ is weakly complete and of total curvature $-4\pi$.
Furthermore, the singular set $\{z \in \C\setminus \{0\}; \, S_2(z) = 0\}$, where
$$
S_2(z) \coloneqq (|z|^2-1)(\Re(c_3)+2\log |z|)- |z|^2-1,
$$
is compact for any $c_3$.
In fact, we consider the real valued function $s_2(x)\coloneqq (x^2-1)(k+2\log x) - x^2-1$ defined on $(0, \infty)$.
It satisfies $\lim_{x\to+0}s_2(x) = +\infty$ for any $k\in\R$, $s_2(x)>0$ for any $k\geq 0$ and some sufficiently 
large $x$, 
and $\lim_{x\to +\infty}s_2(x) = +\infty$ for any $k<0$.
Hence, the ends $z = 0$ and $\infty$ are not accumulation points of $\s$.
Therefore, we find that $\psi$ is complete regular by Proposition \ref{afcplweakcpl}.
%In fact, since $\s$ does not accumulate $z=0$ and $\infty$, the singular set $\s$ is compact.

On the other hand, if we set $z = e^{s + i t}\, (s \in \R, t \in [0, 2\pi))$, then $\psi = \psi(s,t)$ can be rewriten as 
\begin{multline*}
\psi(s,t) = (4(s + \Re(c_3)) \sinh(s) - \cosh(s)) \cos(t),\\ 4((s + \Re(c_3))\sinh(s) - \cosh(s)) \sin (t),\, 
 \sinh(2s) + 2s)
\end{multline*}
up to parallel translation in $\R^3$.
Hence, this surface $\psi$ is a surface of revolution whose profile curve $c$ is 
$
c(s)\coloneqq (4((s + \Re(c_3)) \sinh(s) - \cosh(s)), \sinh(2s) + 2s).
$
One can easily check that, as a planar curve, the profile curve always intersects the vertical axis at two points for any $c_3$.
Thus, the singularities of the catenoid-type affine maxface are cone-like singularities.

Note that this example is contained in the family of the affine maximal maps of $220$-type (Example \ref{ex:220})
 (Figure \ref{Figure2}).
%\begin{align*}
%\psi_1 & = 
%\end{align*}
\end{example}

%\begin{remark}
%The existence of the catenoid-type affine maxface shows that there exists a complete affine maximal map whose affine Gauss map omits 2 values from the unit sphere in $\R^3$.
%\end{remark}

\begin{figure}[h]
\begin{center}
\begin{tabular}{c@{\hspace{2.8cm}}c}
\includegraphics[height=60mm]{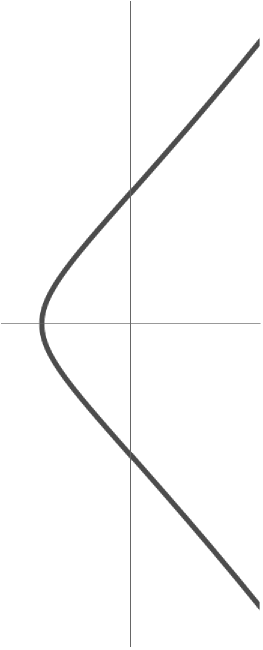}&
\includegraphics[height=60mm]{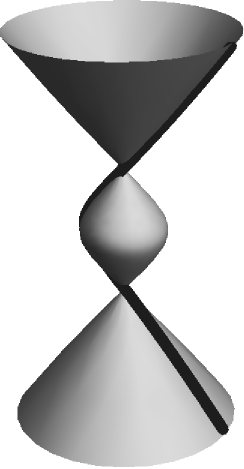}\\
The profile curve $c$  
& Catenoid-type
\end{tabular}
\caption{Example \ref{catenoid} ($c_1 = c_2 = c_3 = 0$)}\label{Figure2}
\medskip
\shortstack{{\small
Left: the profile curve 
$
c(s)= (4((s + \Re(c_3)) \sinh(s) - \cosh(s)), \sinh(2s) + 2s).
$
}
\\
{\small
Right: the profile curve is shown in the black line.
}
}
\end{center}
\end{figure}

\begin{example}[Helicoid-type affine maxface]\label{ex:helicoid}
Let 
\begin{equation}
\Sigma = \C,\qquad  g=e^z,\qquad  \omega =ie^{-z}dz,
\end{equation} 
and then the map $\Phi$ is given by
$$
\Phi = \left(-i \cosh(z) + c_1, -\sinh(z) + c_2, iz + c_3  \right).
$$  
Then, the minimal conormal map $N:\Sigma\to\R^3$ gives the helicoid as the Euclidean minimal immersion.
And then, the affine maxface $\psi$ is always well-defined and of infinite total curvature.
Additionally, the helicoid-type affine maxface is always incomplete. 
In fact, the singular set is equal to	 $\{z \in \C ; S_3(z) = 0\}$, where
$$
S_3(z) \coloneqq \left(e^{2\Re(z)} -1\right)(\Re(c_3) - \Im (z)) + 2\Re\left((\Re(c_1) - i \Re(c_2)) e^z \right).
$$
Setting $z = x + i y \, (x, y \in \R)$, we can rewrite $S_3(z) = S_3(x, y)$ as 
$$
S_3(x, y) = (e^{2x} - 1)(a_3 - y) + 2e^x(a_1 \cos(y) + a_2 \sin(y)),
$$
where $a_j \coloneqq \Re(c_j)\, (j = 1,2,3)$.
Let $y_1 \coloneqq a_3 - \pi$ and $y_2 \coloneqq a_3 + \pi$ and take $M > 0$
so that $|a_1\cos(a_3) + a_2 \sin(a_3)|\leq M$.
Then, one can verify that 
$
S_3(x, y_1) \geq \pi(e^{2x} - 1) - 2Me^x 
$
and 
$
S_3(x, y_2) \leq -(\pi(e^{2x} - 1) - 2Me^x ).
$
Hence, for a sufficiently large $x$, it holds that $S_3(x, y_1) > 0$ and $S_3(x, y_2) < 0$.
Thus, there exists $y_0 \in [y_1, y_2]$ such that $S_3(x, y_0) = 0$.
Letting $z_n = n + i y_0$, we see that $|z_n| \to \infty$ as $n\to \infty$ and $S(z_n) = 0$ for any
sufficiently large $n \in \Z_{\geq 1}$.
Therefore, the singular set accumulates at the end $z = \infty$, which implies that
the helicoid-type affine maxface is incomplete
 (Figure \ref{Figure2}). 

\end{example}

\begin{example}[Affine maxface induced from the minimal M\"{o}bius strip]\label{ex:Mobius}
Let 
\begin{equation}
\Sigma = \C\setminus\{0\},\qquad  g=z^2\fr{z+1}{z-1},\qquad  \omega =i\fr{(z-1)^2}{z^4}dz, 
\end{equation} 
and anti-holomorphic involution $I = - 1/\bar z$. 
The Weierstrass data $\Phi$ is given by
\begin{multline*}
\Phi 
= \left(
\fr{i}{2}\left(
z + \fr{1}{z} +z^2 - \fr{1}{z^2} + \fr{1}{3}\left(z^3 + \fr{1}{z^3}\right)\right) + c_1,\right.\\
\left.\fr{1}{2}\left(
-z + \fr{1}{z} - z^2 - \fr{1}{z^2} + \fr{1}{3}\left(-z^3 + \fr{1}{z^3}\right)\right) + c_2,\, 
i\left(z + \fr{1}{z}\right) + c_3
\right).
\end{multline*}
Then, the $I$-invariant minimal conormal map $N:\Sigma\to\R^3$ induces the non-orientable minimal immersion 
$\Sigma/\langle I \rangle = \R P^2 \setminus \{\text{1 point}\} \to \R^3$,  which is the minimal M\"{o}bius strip as the Euclidean minimal surface (see \cite{Meeks81}).
On the other hand, since the residues of $d\Phi$ always vanish, i.e., the map $\Phi$ is single valued on $\Sigma$,  
the affine maxface is well-defined 
if and only if the residue of $\Phi \times d\Phi$ at $z = 0$ is pure imagenary.
By direct computations, it is equal to $(2i, 0, -2i/3)$.
Thus, the affine maxface is well-defined
for any $c_1, c_2, c_3 \in \C$.
The singular set is given by $\{z \in \C \setminus \{0\}; (|A(z)|^2 - 1)(\Re(c_3) - \Im(z + 1/z)) + \Re(A(z) B(z)) = 0\}$, where
\begin{align*}
A(z) &\coloneqq \fr{z^2(z+1)}{z - 1}, \\
B(z) &\coloneqq 2(\Re(c_1) - i\Re(c_2)) 
+ i\left(z + \fr{1}{z} + z^2 - \fr{1}{z^2} +\fr{1}{3}\left(z^3 + \fr{1}{z^3}\right)\right)\\
&\hspace{4cm}
+ \left(z - \fr{1}{z} + z^2 + \fr{1}{z^2} +\fr{1}{3}\left(z^3 - \fr{1}{z^3}\right)\right).
\end{align*}
Hence, this surface is weakly complete, but incomplete since
the singular set always accumulates at the ends $z = 0$ and $\infty$, regardless of the choice of $c_1, c_2$, and $c_3$,
by an argument similar to that in Example \ref{ex:helicoid}.
Note that although the total curvature of the minimal M\"{o}bius strip as a non-orientable minimal surface is 
$-6\pi$, the resulting affine maxface has the total curvature $-12\pi$.
 (Figure \ref{Figure3}). 

\end{example}

\begin{figure}[h]
\begin{center}
\begin{tabular}{c@{\hspace{0.8cm}}c}
\includegraphics[height=55mm]{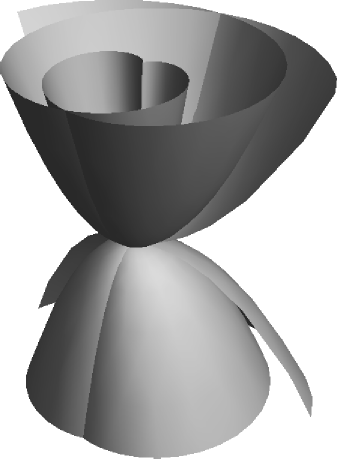}&
\includegraphics[height=55mm]{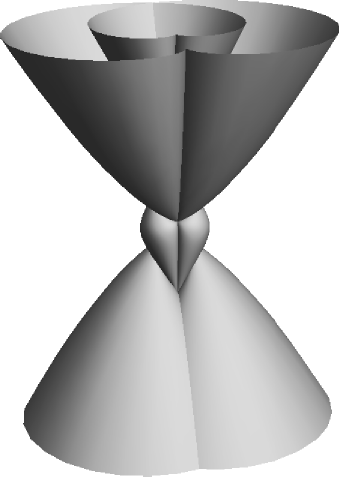}\\
Helicoid-type & Example obtained from the minimal M\"{o}bius strip
\end{tabular}
\caption{Examples \ref{ex:helicoid} and \ref{ex:Mobius} (both $c_1 = c_2 = c_3 = 0$)}\label{Figure3} 
\end{center}
\end{figure}

\begin{example}\label{ex:ms}
Let 
\begin{equation}\label{thm:ms}
\Sigma = \C \setminus \{0\},
\qquad
g = \fr{z^n-a}{z^n-1}, 
\qquad
\omega = \fr{(z^n-1)^2}{z^m}dz,
\end{equation}
where $m, n \in \Z_{\geq 2}$ and $a\in \C\setminus\{0, \pm1\}$  satisfy $n > m-1>1$ and $n\neq 2(m-1)$.

For $n \geq 2$, the pair \eqref{thm:ms} defines a complete Euclidian conformal minimal immersion 
whose Gauss map $g$ omits two values $\{a,1\}$ from the Riemann sphere, 
which was given by Miyaoka--Sato \cite{MS94}.
Let us check the well-definedness of affine maxface.
The equation \eqref{nullcurve} gives that
\begin{align*}
\Phi_1 &= \fr{1}{2} \left(\fr{a^2-1}{m-1}z^{-m+1} + \fr{2(a-1)}{n-m+1}z^{n-m+1}\right) + c_1,\\
\Phi_2 &= \fr{i}{2}\left( \fr{a^2+1}{1-m}z^{-m+1}   - \fr{2(a+1)}{n-m+1}z^{n-m} + \fr{2}{2n-m+1}z^{2n-m+1}\right) + c_2,\\
\Phi_3 &= \fr{a}{1-m}z^{-m+1}-\fr{a+1}{n-m+1}z^{n-m+1}  + \fr{1}{2n-m+1}z^{2n-m+1}  + c_3,
\end{align*}
where $\Phi = (\Phi_1, \Phi_2, \Phi_3)$.
Hence, since the map $\Phi: \Sigma \to \C^3$ is single-valued by $n > m-1 > 1$, 
the affine maxface $\psi  : \Sigma \to \R^3$ is 
well-defined if and only if the residue at the end $z = 0$ of the meromorphic 1-form 
$\al = (\al_1, \al_2, \al_3) \coloneqq \Phi \times d\Phi$ is in $i\R^3$ by \eqref{lelif}.
We can computes $\al$ as follows:
\begin{align*}
\al_1
&=
(F^1_1 z^{-m} +F^2_1 z^{-m+n}  + F^3_1 z^{-m+2n} +F^4_1 z^{-2m+n+1} + F^5_1 z^{-2m+2n+1})dz\\
\al_2 
&=
(F^1_2 z^{-m} +F^2_2 z^{-m+n}  + F^3_2 z^{-m+2n} +F^4_2 z^{-2m+n+1} + F^5_2 z^{-2m+2n+1} \\
& \hspace{8cm}+F^6_2 z^{-2m+3n+1})dz\\
\al_3
&= (F^1_3 z^{-m} +F^2_3 z^{-m+n} +  F^3_3 z^{-m+2n} +F^4_3 z^{-2m+n+1} + F^5_3 z^{-2m+2n+1} \\
& \hspace{8cm}+F^6_3 z^{-2m+3n+1})dz, 
\end{align*}
where $F_j^i\, (i = 1,\dots, 6, \, j = 1,2,3)$ denote rational expressions of $a, m, n, c_1, c_2,$ and $c_3$. 
Since $n > m-1 > 1$ and $n\neq 2(m-1)$, we know that the exponents of $z$ in $\al$ are not equal to $-1$,
and hence the residue of $\al$ at $z = 0$ is $(0,0,0)$.

Thus, we find that the affine maxface given from \eqref{thm:ms} is well-defined, weakly complete, and of total curvature $-4n\pi$.
However, it is incomplete since
the singular set 
$\{z \in \C \setminus\{0\}; S_5(z) = 0\}$, where
\begin{multline*}
S_5(z) \coloneqq 
\Re\left(
\fr{z^n - a}{z^n - 1}
\right)
\left(
a_1 + 
\Re
\left(
\fr{a^2-1}{m-1}z^{-m+1} + \fr{2(a-1)}{n-m+1} z^{n-m+1} 
\right)
\right)\\
+
\Im\left(
\fr{z^n - a}{z^n - 1}
\right)
\left(
a_2 + 
\Im
\left(
\fr{a^2+1}{1-m}z^{-m+1} - \fr{2(a+1)}{n-m+1} z^{n-m+1}\right.\right.\\
\left.\left.\hspace{8cm}
+ \fr{2}{2n-m+1} z^{2n-m+1}
\right)
\right)\\
+
\left(
\left|\fr{z^n - a}{z^n - 1}\right|^2 - 1
\right)
\left(
a_3 + 
\Re
\left(
\fr{a}{1-m}z^{-m+1}
- \fr{a+1}{n-m+1} z^{n-m+1}\right.\right.\\
\left.\left.
+ \fr{1}{2n-m+1} z^{2n-m+1}
\right)
\right)
\end{multline*}
and $a_k = \Re(c_k)\ (k = 1,2,3)$,
accumulates at the end $z = 0$ for any parameters
by an argument similar to that in Example \ref{ex:helicoid}, and thus the resulting affine maxface 
is not complete.
On the other hand, if we choose $a_1 = \Re(c_1) \neq 0$, then the singular set does not accumulate at $\infty$
(Figure \ref{Figure4}).

\end{example}

\begin{figure}[h]
\begin{center}
\begin{tabular}{c@{\hspace{2cm}}c}
\includegraphics[width=45mm]{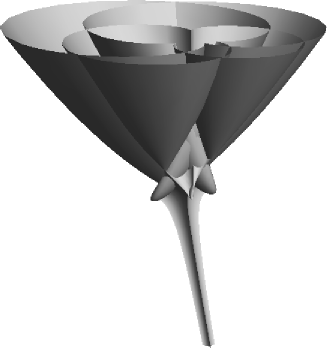}&
\includegraphics[width=55mm]{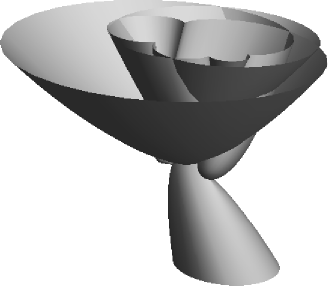}
\end{tabular}
\caption{Example \ref{ex:ms}  $\left(n = 3, m = 2, a=e^{\fr{7\pi i}{8}}\right)$}\label{Figure4}
\medskip
\shortstack{{\small
Left: 
$
c_1 = c_2 = c_3 = 0.
$
\quad
Right: 
$
c_1 = 1, c_2 = c_3 = 0.
$
}
\\
{\small
In both figures, the downward end corresponds to $z = \infty$.
}\\
{\small
In the right figure, the end $z = \infty$ is a complete end.
}
}

%This picture shows an affine maxface given by \eqref{thm:ms} when $n = 3, m = 2, a=e^{\fr{7\pi i}{8}}$.
\end{center}
\end{figure}

\begin{remark}
We do not know whether or not there exists a complete, or weakly complete and of finite total curvature affine maxface with positive genus.
One can check that 
the Weierstrass data of the Chen--Gackstatter minimal surface (\cite{CG82}) does not give affine maxfaces since 
the period condition \eqref{maxperiod} is never satisfied.
Additionally, a partial construction of complete regular affine maximal maps of positive genus
without considering of the period condition was shown in \cite[Section 4]{AMM11}, 
but we also do not know explicit examples other than improper affine fronts with positive genus.
About complete improper affine fronts with genus one,
see \cite[Section 4]{Mart05_IAmap} and \cite[Section 4]{Matsu24}).
\end{remark}

\newcommand{\etalchar}[1]{$^{#1}$}

\end{document}